\newtheorem{lemma}{Lemma}[section]
\newtheorem{theorem}{Theorem}[section]
\begin{document}

\markboth{Sacha C. Blumen}
{Uncoloured $U_{q}(osp(1|2n))$ and $U_{-q}(so({2n+1}))$ link invariants}

\title{
ON THE $U_{q}(osp(1|2n))$ AND $U_{-q}(so({2n+1}))$ UNCOLOURED QUANTUM LINK INVARIANTS
\footnote{Running title: UNCOLOURED $U_{q}(osp(1|2n))$ AND $U_{-q}(so({2n+1}))$ 
LINK INVARIANTS}}

\author{SACHA C. BLUMEN 
\\
School of Mathematics and Statistics,  
\\
The University of Sydney, NSW, 2006, Australia.}

\maketitle

\begin{abstract}

Let $L$ be a link and 
$\Phi^{A}_{L}(q)$ its link invariant associated with the vector representation of the 
quantum (super)algebra $U_{q}(A)$.
Let $F_{L}(r,s)$ be the Kauffman link invariant for $L$ associated with the
Birman--Wenzl--Murakami algebra $BWM_{f}(r,s)$ for complex parameters $r$ and $s$ and a 
sufficiently large rank $f$.

For an arbitrary link $L$, we show that
$\Phi^{osp(1|2n)}_{L}(q) = F_{L}(-q^{2n},q)$ and 
$\Phi^{so({2n+1})}_{L}(-q) = F_{L}(q^{2n},-q)$ for each positive integer $n$ 
and all sufficiently large $f$, and that 
$\Phi^{osp(1|2n)}_{L}(q)$ and $\Phi^{so({2n+1})}_{L}(-q)$ are identical up to a substitution
of variables.

For at least one class of links $F_{L}(-r,-s) = F_{L}(r,s)$ implying
$\Phi^{osp(1|2n)}_{L}(q) = \Phi^{so({2n+1})}_{L}(-q)$ for these links.

\end{abstract}



\section{Introduction}	

Let $L$ be a link and 
$\Phi^{A}_{L}(q)$ the link invariant for $L$ associated with the vector representation of the 
quantum (super)algebra $U_{q}(A)$.
Let $F_{L}(r,s)$ be the Kauffman link invariant for $L$ associated with 
$BWM_{f}(r,s)$, the Birman--Wenzl--Murakami algebra of sufficiently large rank $f$ 
and complex parameters $r$ and $s$.
We are here interested in the invariants $\Phi^{osp(1|2n)}_{L}(q)$ and
$\Phi^{so({2n+1})}_{L}(-q)$ and will prove the following theorems.

\begin{theorem}
\label{th:quantuminvariantequaltokauffman}
For an arbitrary link $L$ and each positive integer $n$,
\begin{itemize}
\item[(i)] $\Phi^{osp(1|2n)}_{L}(q) = F_{L}(-q^{2n},q)$, and 
\item[(ii)] $\Phi^{so({2n+1})}_{L}(-q) = F_{L}(q^{2n},-q)$.
\end{itemize}
\end{theorem}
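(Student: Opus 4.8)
The plan is to realise both invariants as Markov traces of a single braid-group representation that factors through the Birman--Wenzl--Murakami algebra, and then to match parameters. Recall that $\Phi^{A}_{L}(q)$ is constructed by the Reshetikhin--Turaev tangle functor: writing $V$ for the $(2n+1)$-dimensional vector representation of $U_{q}(osp(1|2n))$ (resp.\ $U_{-q}(so(2n+1))$), one assigns to a positive crossing the braiding $\check{R}=\tau\circ R$ acting on $V\otimes V$, to the extremal points the (co)evaluation maps determined by the invariant bilinear form, and closes a braid with the quantum trace built from the ribbon element. First I would show that $\check{R}$ satisfies the defining relations of $BWM_{f}(r,s)$ for the stated values of $r,s$; since any link on $f$ strands is the closure of an $f$-braid, this reduces the theorem to an identity of trace functionals on $BWM_{f}(r,s)$.

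The braid relation for $\check{R}$ is automatic from the Yang--Baxter equation, so the content is the cubic (skein) relation and the relations involving the contraction idempotent $e_{i}$. Because $osp(1|2n)$ (resp.\ $so(2n+1)$) preserves a nondegenerate invariant form, the tensor square $V\otimes V$ decomposes into exactly three irreducible summands---a one-dimensional invariant together with the (graded) symmetric and antisymmetric complements---on each of which $\check{R}$ acts as a scalar. I would compute these three eigenvalues explicitly from the universal $R$-matrix on the highest-weight vector of each summand; after factoring out the overall ribbon scalar they should take the BWM form $s,\,-s^{-1},\,r^{-1}$. This simultaneously yields the minimal polynomial $(\check{R}-s)(\check{R}+s^{-1})(\check{R}-r^{-1})=0$, i.e.\ the Kauffman skein relation, and identifies $e_{i}$ (up to a scalar) with the projection onto the invariant summand, namely a multiple of $\mathrm{coev}\circ\mathrm{ev}$.

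Matching parameters is where the two algebras are distinguished: the eigenvalue on the invariant summand is governed by the twist and quantum dimension of $V$, and a careful bookkeeping of the super sign in the graded flip $\tau$ should produce $r=-q^{2n}$ in the $osp(1|2n)$ case but $r=+q^{2n}$ after the substitution $q\mapsto -q$ in the $so(2n+1)$ case, with $s=q$ and $s=-q$ respectively. I would then verify the remaining BWM relations relating $e_{i}$ to $\check{R}_{i}$ and $\check{R}_{i\pm1}$ (in a suitable normalisation convention) and check that the loop parameter $\delta=1+(r-r^{-1})/(s-s^{-1})$ equals the quantum dimension of $V$, so that $\check{R}$ indeed furnishes a representation of $BWM_{f}(r,s)$.

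Finally I would identify the closing operations. Both $\Phi$ and $F$ are obtained by applying a trace functional to the common braid-group element, so it remains to check that the quantum (ribbon) trace agrees with the Markov trace of $BWM_{f}(r,s)$ up to the writhe normalisation that both definitions carry: the ribbon element acts on $V$ by the scalar matching Kauffman's framing factor, and the partial quantum trace of $\check{R}^{\pm1}$ reproduces the Markov parameter, so the two functionals coincide on the image of the braid group; equivalently one may invoke uniqueness of the Markov trace on the BWM tower for $f$ large. Either way $\Phi=F$ for every link. The main obstacle I anticipate is the eigenvalue and $e_{i}$-relation computation in the graded setting: getting the super signs right is precisely what separates $r=-q^{2n}$ from $r=+q^{2n}$, and hence what makes part (i) genuinely distinct from part (ii) rather than a formal consequence of the substitution $q\mapsto -q$.
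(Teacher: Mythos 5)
Your proposal is correct and takes essentially the same route as the paper: both realise $\Phi^{osp(1|2n)}_{L}(q)$ and $\Phi^{so(2n+1)}_{L}(-q)$ as Markov-trace closures of braid representations that factor through $BWM_{f}(-q^{2n},q)$ and $BWM_{f}(q^{2n},-q)$ (via $\Upsilon: g_{i}^{\pm 1}\mapsto -(\check{R}^{osp(1|2n)}_{VV})_{i}^{\pm 1}$ and $\rho^{so(2n+1)}_{V}: g_{i}^{\pm 1}\mapsto (\check{R}^{so(2n+1)}_{VV})_{i}^{\pm 1}$), and then reduce the theorem to the identities $\psi^{osp(1|2n)}(\Upsilon(a))=\mathrm{tr}(a)$ and $\psi^{so(2n+1)}(\rho^{so(2n+1)}_{V}(a))=\mathrm{tr}(a)$. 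The only difference is one of economy: the paper cites these two ingredients from \cite{blumen05} and \cite{tw}, whereas you propose to reprove them directly (eigenvalues on the three summands of $V\otimes V$, identification of $e_{i}$ with the projection onto the invariant summand, matching of the loop parameter with $\mathrm{dim}_{-q}(V)=\mathrm{sdim}_{q}(V)$, and uniqueness of the Markov trace), which is precisely the content of the cited results.
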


Recall that the braid group on $l$ strings, $B_{l}$, has generators  
$\{\sigma_{1}, \sigma_{2}, \ldots, \sigma_{l-1}\}$ satisfying the relations
\begin{eqnarray}
\sigma_{i}\sigma_{j} & = & \sigma_{j}\sigma_{i}, \hspace{15mm} |i-j| > 1,  \label{braidrel1} \\
\sigma_{i} \sigma_{i+1} \sigma_{i} & = & \sigma_{i+1} \sigma_{i} \sigma_{i+1}. \label{braidrel2}
\end{eqnarray}
Figure \ref{fig:generatorsigma} shows a graphical representation of $\sigma_{i}$ and $\sigma_{i}^{-1}$.

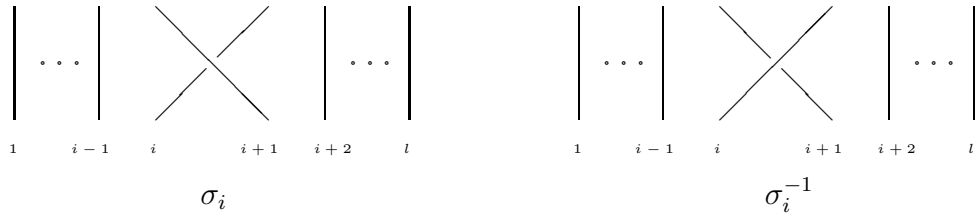
\begin{figure}[hbt]
\label{fig:generatorsigma}
\begin{center}
\setlength{\unitlength}{0.15mm}
\begin{picture}(850,200) \large\sf
\thinlines

\put(125,175){\line(1,-1){100}} 
\put(200,45){\tiny{\text{$i+1$}}}

\put(180,130){\line(1,1){45}}
\put(125,75){\line(1,1){45}}
\put(120,45){\tiny{\text{$i$}}}

\put(0,75){\line(0,1){100}}
\put(-5,45){\tiny{\text{$1$}}}
\put(75,75){\line(0,1){100}}
\put(50,45){\tiny{\text{$i-1$}}}
\put(275,75){\line(0,1){100}}
\put(265,45){\tiny{\text{$i+2$}}}
\put(350,75){\line(0,1){100}}
\put(345,45){\tiny{\text{$l$}}}

\put(25,125){\circle{2}} 
\put(40,125){\circle{2}} 
\put(55,125){\circle{2}} 

\put(300,125){\circle{2}} 
\put(315,125){\circle{2}} 
\put(330,125){\circle{2}} 

\put(165,0){\text{$\sigma_{i}$}}

\put(625,75){\line(1,1){100}} 
\put(620,45){\tiny{\text{$i$}}}

\put(625,175){\line(1,-1){45}}
\put(680,120){\line(1,-1){45}}
\put(700,45){\tiny{\text{$i+1$}}}

\put(500,75){\line(0,1){100}}
\put(495,45){\tiny{\text{$1$}}}
\put(575,75){\line(0,1){100}}
\put(550,45){\tiny{\text{$i-1$}}}
\put(775,75){\line(0,1){100}}
\put(765,45){\tiny{\text{$i+2$}}}
\put(850,75){\line(0,1){100}}
\put(845,45){\tiny{\text{$l$}}}

\put(525,125){\circle{2}} 
\put(540,125){\circle{2}} 
\put(555,125){\circle{2}} 

\put(800,125){\circle{2}} 
\put(815,125){\circle{2}} 
\put(830,125){\circle{2}} 

\put(665,0){\text{$\sigma^{-1}_{i}$}}


\end{picture}
\caption{The generators $\sigma_{i}$ and $\sigma^{-1}_{i}$ of $B_{l}$}  
\end{center}
\end{figure}

The following theorem is a key result.
\begin{theorem}
\label{th:equalityofkauffman}
Let $L(m)$, $m \in \mathbb{Z}$, be a link presented as the canonical closure of a braid
with corresponding braid group element $(\sigma_{1})^{m}$.
Then $F_{L(m)}(-r,-s) = F_{L(m)}(r,s)$ and $\Phi^{osp(1|2n)}_{L(m)}(q)=\Phi^{so({2n+1})}_{L(m)}(-q)$.
\end{theorem}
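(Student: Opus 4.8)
The plan is to prove Theorem~\ref{th:equalityofkauffman} in two parts. First I would establish the symmetry $F_{L(m)}(-r,-s) = F_{L(m)}(r,s)$ for these very simple links, and then deduce the equality of quantum invariants as a corollary using Theorem~\ref{th:quantuminvariantequaltokauffman}.

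For the first part, the key observation is that $L(m)$ is the closure of $(\sigma_1)^m$ in the braid group $B_l$, which is about as simple a braid as one can write: all strands are parallel except a single pair that crosses $m$ times. The Kauffman invariant $F_L(r,s)$ is computed via a Markov trace on a representation of the BWM algebra $BWM_f(r,s)$, so the plan is to write down the trace of the image of $(\sigma_1)^m$ explicitly. The BWM algebra is generated by elements $g_i$ (images of $\sigma_i$) satisfying a quadratic-type skein relation of the form $g_i - g_i^{-1} = (s - s^{-1})(1 - e_i)$ together with relations involving the tangle generators $e_i$ and the parameter $r$; the element $g_1^m$ lives in the subalgebra generated by $g_1$ and $e_1$ alone. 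On that two-dimensional-ish subalgebra one can diagonalise or directly expand $g_1^m$ as a Laurent polynomial in $r$ and $s$ with coefficients that are themselves Laurent polynomials, and then apply the Markov trace. I would track how each term transforms under $(r,s) \mapsto (-r,-s)$: the point is that in the closed-form expression for $F_{L(m)}(r,s)$, every monomial in $r$ and $s$ should turn out to have even total degree (or the odd-degree contributions cancel in the trace), so the simultaneous sign flip $r \mapsto -r$, $s \mapsto -s$ leaves the value invariant. Concretely I expect $F_{L(m)}$ to be a polynomial in $r^{\pm 1}$, $s^{\pm 1}$ and the loop-parameter combination, and the sign-change invariance should follow from a parity bookkeeping of the exponents.

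For the second part, I would simply invoke Theorem~\ref{th:quantuminvariantequaltokauffman}. By part~(i), $\Phi^{osp(1|2n)}_{L(m)}(q) = F_{L(m)}(-q^{2n}, q)$, and by part~(ii), $\Phi^{so(2n+1)}_{L(m)}(-q) = F_{L(m)}(q^{2n}, -q)$. The two evaluation points are related by exactly the sign flip $(r,s) = (-q^{2n}, q) \mapsto (q^{2n}, -q) = (-r, -s)$, so the symmetry established in the first part gives
\begin{equation}
\Phi^{osp(1|2n)}_{L(m)}(q) = F_{L(m)}(-q^{2n}, q) = F_{L(m)}(q^{2n}, -q) = \Phi^{so(2n+1)}_{L(m)}(-q),
\end{equation}
which is the second claimed identity. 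This reduces everything to the single symmetry statement.

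I expect the main obstacle to be the first part: pinning down the exact closed form of $F_{L(m)}(r,s)$ and proving the parity/cancellation that forces $(r,s) \mapsto (-r,-s)$ invariance. The subtlety is that the Markov trace introduces the loop factor (a ratio involving $r$ and $s$) and the framing/writhe normalisation contributes a factor like $(r^{-1})^m$ or similar, so the naive parity count can be spoiled by these normalisation terms; I would need to verify that the writhe correction and the trace of $g_1^m$ conspire so that the overall odd-degree part in $(r,s)$ vanishes. Handling the two cases $m$ even and $m$ odd separately, and being careful with the eigenvalues of $g_1$ (which are of the form $s$, $-s^{-1}$, and $r^{-1}$, each carrying definite sign behaviour under $s \mapsto -s$, $r \mapsto -r$), should make the cancellation transparent once the eigenvalue decomposition of $g_1^m$ is in hand.
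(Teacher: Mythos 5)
Your proposal is correct and takes essentially the same approach as the paper: the paper's Lemma \ref{lem:powersofg} expands $(g_{1})^{m} = a_{m}(r,s) + b_{m}(r,s)\,g_{1} + c_{m}(r,s)\,e_{1}$ and records the parity of the exponents of $r,s$ in each coefficient, and Lemma \ref{lem:positivenegativelinkpolynomials} then verifies that these parities conspire with the writhe factor $r^{-m}$ (and the sign-flip invariance of $x$) to yield $F_{L(m)}(-r,-s)=F_{L(m)}(r,s)$ --- exactly the parity bookkeeping you describe, including the normalisation subtlety you flag. The second identity is then deduced from Theorem \ref{th:quantuminvariantequaltokauffman} precisely as in your second part.
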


We have not proved a theorem corresponding to Theorem \ref{th:equalityofkauffman} for arbitrary links, but
have the following weaker result.
\begin{theorem}
\label{thm:equalsubstitutionofvariables}
For each arbitrary link $L$ and each positive integer $n$,
 $\Phi^{osp(1|2n)}_{L}(q)$ and $\Phi^{so({2n+1})}_{L}(-q)$ are equal up a substitution of variables. 
\end{theorem}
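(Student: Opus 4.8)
The plan is to leverage Theorem~\ref{th:quantuminvariantequaltokauffman}, which already identifies both quantum invariants with specialisations of the single two-variable Kauffman invariant $F_{L}(r,s)$. By part~(i) we have $\Phi^{osp(1|2n)}_{L}(q) = F_{L}(-q^{2n},q)$ and by part~(ii) we have $\Phi^{so(2n+1)}_{L}(-q) = F_{L}(q^{2n},-q)$. Thus both invariants are values of the \emph{same} function $F_{L}$, evaluated at two different points of the $(r,s)$-plane. The whole statement then reduces to exhibiting an explicit change of variables in $F_{L}(r,s)$ that carries one specialisation point to the other. Concretely, I would set $(r,s) = (-q^{2n}, q)$ for the $osp$ side and observe that the $so$ side is obtained by the substitution $r \mapsto -r$ and $s \mapsto -s$, since $(-(-q^{2n}), -q) = (q^{2n}, -q)$. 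So the claim is literally the assertion that $\Phi^{so(2n+1)}_{L}(-q)$ equals $\Phi^{osp(1|2n)}_{L}(q)$ after the variable change $(r,s)\mapsto(-r,-s)$ inside the common parent invariant.

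First I would state precisely what ``equal up to a substitution of variables'' means here, namely that there is a fixed invertible substitution on the parameter space of the Kauffman/BWM invariant, independent of the link $L$ and of $n$, relating the two evaluation points. Second, I would record the two identities from Theorem~\ref{th:quantuminvariantequaltokauffman} and simply compare the arguments: writing $F_{L}$ as a function of its two arguments, the $osp$ invariant uses arguments $(-q^{2n},q)$ and the $so$ invariant uses $(q^{2n},-q)$, which differ exactly by negating both slots. Third, I would phrase this as a single formal substitution $r \mapsto -r,\; s\mapsto -s$ and conclude that the two invariants coincide under it, which is exactly the assertion of the theorem.

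The content of the theorem is genuinely lightweight once Theorem~\ref{th:quantuminvariantequaltokauffman} is in hand: it is essentially a bookkeeping observation about the arguments fed into $F_{L}$. I therefore do not expect a substantive analytic or algebraic obstacle; rather, the only care needed is to make the notion of ``substitution of variables'' rigorous and to confirm that the substitution is uniform in $L$ and $n$. In particular, I would emphasise that this weaker statement does \emph{not} claim $F_{L}(-r,-s)=F_{L}(r,s)$ as functions --- which would force the two invariants to be genuinely equal as in Theorem~\ref{th:equalityofkauffman} --- but only that they are related by relabelling the formal parameters. The delicate point, and the reason this is stated as a strictly weaker companion to Theorem~\ref{th:equalityofkauffman}, is that the invariance of $F_{L}$ under $(r,s)\mapsto(-r,-s)$ has only been established for the special family $L(m)$, so for a general link the substitution connects the two invariants symbolically without asserting their numerical equality.

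Finally, I would make explicit that this substitution is well defined: because $F_{L}(r,s)$ is (for sufficiently large rank $f$) a genuine function of the two complex parameters $r$ and $s$ arising from the BWM algebra, negating both parameters is a legitimate automorphism of the parameter space, and the resulting relation between $\Phi^{osp(1|2n)}_{L}(q)$ and $\Phi^{so(2n+1)}_{L}(-q)$ holds for every link $L$ and every positive integer $n$. This completes the proof.
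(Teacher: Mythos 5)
Your opening move coincides with the paper's: invoke Theorem \ref{th:quantuminvariantequaltokauffman} and observe that the two evaluation points $(-q^{2n},q)$ and $(q^{2n},-q)$ are exchanged by $(r,s)\mapsto(-r,-s)$. But the step you dismiss as ``bookkeeping'' is exactly where the paper locates the content of the proof, and your version has a genuine gap there. The difficulty is that $F_{L}(-q^{2n},q)$ and $F_{L}(q^{2n},-q)$ are not defined as evaluations of the rational function $F_{L}(r,s)\in\mathbb{C}(r,s)$; they are defined intrinsically from the specialised algebras $BWM_{f}(\mp q^{2n},\pm q)$, in which $r$ and $s$ have been replaced by two powers of the \emph{same} complex number $q$ (and at which the algebra need not be semisimple, so specialisation and trace do not obviously commute). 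One must therefore show, first, that each specialised invariant really is the evaluation of one common two-variable expression, and second, that the substitution $(r,s)\mapsto(-r,-s)$ has meaning for the resulting one-variable functions, in which powers of $q$ coming from $r$ cannot be distinguished from powers coming from $s$: the paper points out that the substitution $q^{m}\mapsto(-q)^{m}$, $-q^{2n}\mapsto q^{2n}$ is not even consistent as an operation on a function of the single variable $q$. Your appeal to ``negating both parameters is a legitimate automorphism of the parameter space'' lives entirely at the two-variable level and never confronts this. Indeed, read literally, your definition of ``equal up to a substitution of variables'' (the two evaluation points are related by a fixed invertible map of the $(r,s)$-plane) is a statement about the points, not about the invariants, and would hold for essentially any pair of specialisations of $F_{L}$; it proves a vacuous reading of the theorem rather than the one the paper establishes.

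The paper fills this gap with the matrix-unit machinery. Writing $\mathrm{tr}(X)=\sum_{(S,T)\in\Omega_{f}}c_{ST}\,\mathrm{tr}(E_{ST})$ in the Ram--Wenzl basis, it shows in Eqs.\ (\ref{eq:1stequation})--(\ref{eq:leftandright}) that each specialised trace equals the evaluation of the \emph{truncated} sum $\sum_{(S,T)\in\Omega_{f}(-q^{2n},q)}c_{ST}\,\mathrm{tr}(E_{ST})$, because $\mathrm{tr}(E_{SS})$ vanishes at the specialisation whenever $S$ leaves the truncated Bratteli diagram (Lemma \ref{lemtraceofmatrixunit}), while the surviving matrix units specialise to well-defined nonzero elements. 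Crucially, the two truncations coincide, $\Omega_{f}(-q^{2n},q)=\Omega_{f}(q^{2n},-q)$, and this rests on Lemma \ref{lem:JCBsong}, $Q_{\lambda}(-q^{2n},q)=Q_{\lambda}(q^{2n},-q)$, i.e.\ on the identity of the truncated Bratteli diagrams. Only after this do both invariants appear as evaluations, at points exchanged by $(r,s)\mapsto(-r,-s)$, of one and the same two-variable function supported on a common index set --- which is the precise sense in which the theorem asserts equality up to substitution, and even then the paper is careful to claim the symmetry only ``in principle''. To repair your argument you would need either to reproduce this matrix-unit analysis (including Lemma \ref{lem:JCBsong}), or to supply some other justification that the intrinsically defined specialised invariants agree with evaluations of a single well-defined element of $\mathbb{C}(r,s)$ at the two points in question.
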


The background for the connection between $\Phi^{osp(1|2n)}_{L}(q)$ and ${\Phi^{so({2n+1})}_{L}(-q)}$ 
starts with Zhang \cite{zsuper}, who showed that an 
isomorphism exists between $U_{-q}(so({2n+1}))$ and $U_{q}(osp(1|2n))$ at generic $q$ (i.e. for $q$ not a root of unity)
where $U_{-q}(so({2n+1}))$ is restricted to finite dimensional tensorial highest weight irreducible representations and 
$U_{q}(osp(1|2n))$ is restricted to finite dimensional highest weight irreducible representations.

The Clebsch-Gordan coefficients for tensor products of these $U_{q}(osp(1|2n))$ irreps are identical
to those of
tensor products of the finite dimensional $U_{-q}(so({2n+1}))$ irreps with the same highest weights \cite{zsuper}.

Let $V$ be the module for the $({2n+1})$-dimensional irreducible (vector) representation $\pi_{V}$
of $U_{q}(so({2n+1}))$.
Then there also exists a $({2n+1})$-dimensional irreducible representation of $U_{q}(osp(1|2n))$ the module of which
we also denote by $V$.
Representations of $B_{l}$ exist in the $U_{q}(so({2n+1}))$ and $U_{q}(osp(1|2n))$ centralisers of $V^{\otimes g}$ for all $g \geq l$:
\begin{eqnarray*}
\rho^{so}: B_{l} & \rightarrow & \mathrm{End}_{U_{q}(so({2n+1}))}(V^{\otimes g}), \\
\rho^{osp}: B_{l} & \rightarrow & \mathrm{End}_{U_{q}(osp(1|2n))}(V^{\otimes g}),
\end{eqnarray*} 
and Markov traces can be defined on $\rho^{so}$ and $\rho^{osp}$ \cite{z1}.
Link invariants $\Phi^{so({2n+1})}_{L}(q)$ and 
$\Phi^{osp(1|2n)}_{L}(q)$ can then be defined from these Markov traces \cite{z1}.

The relationship between $\Phi^{osp(1|2n)}_{L}(q)$ and $\Phi^{so({2n+1})}_{L}(-q)$, for an arbitrary link
$L$, is unclear notwithstanding the limited isomorphism between $U_{-q}(so({2n+1}))$ and $U_{q}(osp(1|2n))$.
We will prove that the Bratteli diagrams for
certain semisimple quotients of $BWM_{f}(-q^{2n},q)$ and $BWM_{f}(q^{2n},-q)$ are identical for each fixed $f$.
An abstract symmetry between $\Phi^{osp(1|2n)}_{L}(q)$ and $\Phi^{so({2n+1})}_{L}(-q)$ 
is then implied from the combination of the fact that  
$F_{L}(-q^{2n},q)$ and $F_{L}(q^{2n},-q)$ are specialisations of $F_{L}(r,s)$
and Theorem \ref{th:quantuminvariantequaltokauffman}.

The reader should note that we would have $\Phi^{osp(1|2n)}_{L}(q) = \Phi^{so({2n+1})}_{L}(-q)$ for all links $L$
from Theorem \ref{th:quantuminvariantequaltokauffman} if we could extend the result in Theorem \ref{th:equalityofkauffman} 
to arbitrary links. We fix the notation $\mathbb{Z}_{+} = \{0, 1, 2, \ldots \}$.

\section{$U_{q}(osp(1|2n))$ and $U_{-q}(so({2n+1}))$ link invariants}

\subsection{Using Markov traces to define link invariants}
\label{subsect:linkinvarmarkovtrace}

Let $L$ be a link presented as the canonical closure of a braid on $f$ strings
that has the corresponding braid group element 
$b = \sigma_{i_{1}}^{m_{1}} \sigma_{i_{2}}^{m_{2}} \cdots \sigma_{i_{j}}^{m_{j}} \in B_{f}$, 
$m_{k} \in \mathbb{Z}$ for each $k$.
An example of such a braid on $f$ strings, corresponding to the element $\sigma_{2} \in B_{f}$, 
$f \geq 3$, is shown in Figure 2.

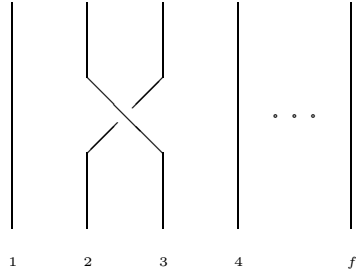
\begin{figure}[hbt]
\label{fig:examplebraid}
\begin{center}
\setlength{\unitlength}{0.5mm}
\begin{picture}(110,70) \large\sf

\put(0,10){\line(0,1){60}}
\put(-1,0){\tiny{\text{$1$}}}

\put(20,10){\line(0,1){20}}
\put(20,50){\line(0,1){20}}
\put(20,50){\line(1,-1){20}}

\put(20,30){\line(1,1){8}}
\put(32,42){\line(1,1){8}}
\put(19,0){\tiny{\text{$2$}}}

\put(40,10){\line(0,1){20}}
\put(40,50){\line(0,1){20}}
\put(39,0){\tiny{\text{$3$}}}

\put(60,10){\line(0,1){60}}
\put(59,0){\tiny{\text{$4$}}}

\put(70,40){\circle{1}} 
\put(75,40){\circle{1}} 
\put(80,40){\circle{1}}

\put(90,10){\line(0,1){60}}
\put(89,0){\tiny{\text{$f$}}}

\end{picture}
\caption{A braid with braid group element $\sigma_{2}$ on $f$ strings}  
\end{center}
\end{figure}

Let $\rho$ be a nontrivial representation of $B_{f}$.
Let $\psi: \rho(B_{f}) \rightarrow \mathbb{C}$ be a functional satisfying the 
three Markov properties:
\begin{eqnarray}
\psi(\rho(\theta_{i}\theta_{j})) & = & \psi(\rho(\theta_{j}\theta_{i})),
 \ \forall \theta_{i}, \theta_{j} \in B_{f},  \label{eq:markov1} \\
\psi(\rho(\theta \sigma_{f-1})) & = & z \psi(\rho(\theta)),  \ \forall \theta \in B_{f-1} \subset B_{f}, \ z \in \mathbb{C}, \label{eq:markov2} \\
\psi(\rho(\theta \sigma_{f-1}^{-1})) & = & 
\widetilde{z}\psi(\rho(\theta)),  \ \forall \theta \in B_{f-1} 
\subset B_{f}, \ \widetilde{z} \in \mathbb{C}, \label{eq:markov3}
\end{eqnarray}
where we take $\psi$ on the right hand sides of Eqs. (\ref{eq:markov2}) and (\ref{eq:markov3}) to be defined
on $\rho(B_{f-1})$ where $B_{f-1}$ is the subgroup of $B_{f}$ generated by $\{\sigma_{i}^{\pm 1}| \  i=1, 2, \ldots f-2\}$. 
Then a link polynomial for $L$ is 
\begin{equation}
\label{eq:link_polynomial}
\widetilde{F}(L) = (z \widetilde{z})^{-(f-1)/2} (\widetilde{z}/z)^{e(b)/2}\psi(\rho(b)),
\end{equation}
where $e(b) = \sum_{k=1}^{j} m_{k}$ \cite{z1}.

\subsection{$U_{q}(osp(1|2n))$ and $U_{-q}(so({2n+1}))$ and their representations}
\label{subsect:reptheory}

\subsubsection{The quantum superalgebra $U_{q}(osp(1|2n))$}
\label{subsubsect:thequantumsuperalgebra}

Let $H^{*}$ be an $n$-dimensional complex vector space with a basis 
$\{ \epsilon_{i} | \ i=1, 2, \ldots, n\}$ and let
$( \cdot, \cdot): H^{*} \rightarrow \mathbb{C}$ be a $\mathbb{C}$-bilinear form defined by
$(\epsilon_{i}, \epsilon_{j}) = \delta_{ij}$ where 
$\delta_{ij} = 1$ if $i=j$ and $0$ otherwise.

Let $\{ \alpha_{i} | \ i=1, 2, \ldots, n\}$ be a basis of simple roots of $H^{*}$: fix 
$\alpha_{i} = \epsilon_{i} - \epsilon_{i+1}$ for $i \leq n-1$ and $\alpha_{n} = \epsilon_{n}$.
The Cartan matrix $A$ for the Lie superalgebra $osp(1|2n)$ 
is identical to that of the Lie algebra $so({2n+1})$:
$A = (A_{ij})_{i,j=1}^{n}$ where $a_{ij} = 2(\alpha_{i},\alpha_{j})/(\alpha_{i},\alpha_{i})$.

The set of positive roots of $osp(1|2n)$ is $\Phi^{+} = \Phi_{0}^{+} \cup \Phi_{1}^{+}$;
$\Phi_{0}^{+} = {\{\epsilon_{i} \pm \epsilon_{j},  2\epsilon_{k} | \ 1 \leq i < j \leq n, 1 \leq k \leq n\}}$
is the set of positive even roots and 
$\Phi_{1}^{+} = {\{\epsilon_{k} | \ 1 \leq k \leq n\}}$
is the set of positive odd roots.

Let $q$ be a non-zero complex parameter satisfying $q^{2} \neq 1$.
The Jimbo quantum superalgebra $U_{q}(osp(1|2n))$ is a $\mathbb{Z}_{2}$-graded Hopf algebra
with generators $\{ e_{i}, f_{i}, k_{i}^{\pm 1} | \ i=1, 2, \ldots, n\}$. 
The grading of each generator is even except for $e_{n}$ and $f_{n}$ which are graded to be odd.
The generators are subject to the following relations:
$$
\displaystyle{e_{i}f_{j} - f_{j}e_{i} = \delta_{ij}\frac{k_{i} - k_{i}^{-1}}{q-q^{-1}}}, 
\hspace{5mm} i < n, 
\hspace{10mm}  
\displaystyle{e_{n}f_{n} + f_{n}e_{n} = \frac{k_{n} - k_{n}^{-1}}{q-q^{-1}}},$$
\begin{equation}
\label{eq:quantumosprelations2}
k_{i}e_{j}k_{i}^{-1} = q^{(\alpha_{i},\alpha_{j})}e_{j}, \hspace{5mm}
k_{i}f_{j}k_{i}^{-1} = q^{-(\alpha_{i},\alpha_{j})}f_{j}, \hspace{10mm} i, j \leq n,
\end{equation}
\begin{equation}
\label{eq:quantumosprelations3}
k_{i}^{\pm 1}k_{j}^{\pm 1} = k_{j}^{\pm 1}k_{i}^{\pm 1}, \hspace{5mm}
k_{i}^{\pm 1}k_{j}^{\mp 1} = k_{j}^{\mp 1}k_{i}^{\pm 1}, \hspace{10mm} i, j \leq n,
\end{equation}
together with the quantum Serre relations which we will not be directly using 
in this paper and which can be found in \cite{zsuper}.

The grading of each graded element $x \in U_{q}(osp(1|2n))$ is indicated  
by writing $[x]=0$ if $x$ is even and $[x]=1$ if $x$ is odd.

We will use the co-algebra structure of $U_{q}(osp(1|2n))$ in dealing with representations of braid groups. 
The co-multiplication is an algebra homomorphism 
$\Delta: U_{q}(osp(1|2n)) \rightarrow U_{q}(osp(1|2n)) \otimes U_{q}(osp(1|2n))$
defined by
$$\Delta(e_{i}) = e_{i} \otimes k_{i} + 1 \otimes e_{i},
\hspace{5mm} \Delta(f_{i}) = f_{i} \otimes 1 + k^{-1}_{i} \otimes f_{i},
\hspace{5mm} \Delta(k_{i}^{\pm 1}) = k_{i}^{\pm 1} \otimes k_{i}^{\pm 1},$$
for $i=1, 2, \ldots, n$, 
and the co-unit $\epsilon: U_{q}(osp(1|2n)) \rightarrow \mathbb{C}$ is a homomorphism defined by
$$\epsilon(e_{i})=\epsilon(f_{i}) = 0, \hspace{5mm} \epsilon(k_{i}^{\pm 1}) = \epsilon(1)=1, 
\hspace{5mm} i=1, 2, \ldots, n.$$

The elements $xy \in U_{q}(osp(1|2n))$ and $x \otimes y \in U_{q}(osp(1|2n)) \otimes U_{q}(osp(1|2n))$ are graded if both elements 
$x, y \in U_{q}(osp(1|2n))$ are graded; in this case
 $[xy] = [x \otimes y] = ([x] + [y]) \pmod{2}$.

$U_{q}(osp(1|2n))$ is a $\mathbb{Z}_{2}$-graded algebra:
$$U_{q}(osp(1|2n)) = \bigoplus_{i=0,1} U_{q}(osp(1|2n))_{i}$$ where 
$U_{q}(osp(1|2n))_{i} = \{ x \in U_{q}(osp(1|2n)) | \ [x]=i \}$. 
An element $x \in U_{q}(osp(1|2n))$ is said to be homogeneous if $x \in \bigcup_{i=0}^{1} U_{q}(osp(1|2n))_{i}$.

There is a graded permutation operator 
$$P: U_{q}(osp(1|2n)) \otimes U_{q}(osp(1|2n)) \rightarrow U_{q}(osp(1|2n)) \otimes U_{q}(osp(1|2n))$$
that acts on homogeneous elements $x, y \in U_{q}(osp(1|2n))$ by:
$$P(x \otimes y) = (-1)^{[y][x]} y \otimes x,$$
the action of which is extended to inhomogeneous elements by linearity.

$U_{q}(osp(1|2n)) \otimes U_{q}(osp(1|2n))$ is a 
$\mathbb{Z}_{2}$-graded algebra with multiplication
$$(a \otimes b) (x \otimes y) = (-1)^{[b][x]} ax \otimes by,$$
for homogeneous elements $a, b, x, y \in U_{q}(osp(1|2n))$ which extends to 
inhomogeneous elements by linearity.

Let $\overline{\pi}_{W}$ be any representation of $U_{q}(osp(1|2n))$ and denote its module by $W$.
The quantum supertrace of $X \in \mathrm{End}_{\mathbb{C}}(W)$ is defined by
$$\mathrm{str}_{q}(X) = \mathrm{str}\big(\overline{\pi}_{W}(k_{2\rho}) \circ X\big),$$
where $\mathrm{str}$ is the usual supertrace and
$k_{2\rho}$ is such a product of the $k_{i}$'s that 
$k_{2\rho}e_{i}k_{2\rho}^{-1} = q^{(2\rho, \alpha_{i})}e_{i}$ for all $i$, where $2\rho \in H^{*}$
is defined by
 $$2\rho = \sum_{\alpha \in \Phi_{0}^{+}} \alpha - \sum_{\beta \in \Phi_{1}^{+}} \beta 
              = \sum_{i=1}^{n} (2n-2i+1) \epsilon_{i}.$$
We define the quantum superdimension of $\overline{\pi}_{W}$ 
to be the quantum supertrace of the identity map on $W$:
$\mathrm{sdim}_{q}(\overline{\pi}_{W}) = \mathrm{str}_{q}(\mathrm{id}_{W})$.

\subsubsection{Representations of $U_{q}(osp(1|2n))$}
\label{subsubsect:repsofthequantumsuperalgebra}

At generic $q$, 
the finite dimensional irreducible representations (irreps) of $U_{q}(osp(1|2n))$ 
are either highest weight deformations of highest weight $osp(1|2n)$ irreps or non-highest weight irreps.

A highest weight $U_{q}(osp(1|2n))$ irrep is completely characterised by its highest weight.
An element $\lambda \in H^{*}$ is said to be integral dominant if 
$l_{i} = 2(\lambda,\alpha_{i})/(\alpha_{i},\alpha_{i}) \in \mathbb{Z}_{+}$ for all $i < n$
and $l_{n} = (\lambda,\alpha_{n})/(\alpha_{n},\alpha_{n}) \in \mathbb{Z}_{+}$. 
The set of all integral dominant weights is the set of highest weights of the highest weight $U_{q}(osp(1|2n))$ irreps and we denote it by $P^{+}$.

We denote the $U_{q}(osp(1|2n))$ irrep with highest weight $\lambda \in P^{+}$ by 
$\overline{\pi}_{\lambda}$ and its corresponding module by $V_{\lambda}$.

\subsubsection{The quantum algebra $U_{q}(so({2n+1}))$}
\label{subsubsect:thequantumalgebra}

The quantum algebra $U_{q}(so({2n+1}))$  is generated by
$\{E_{i}, F_{i}, K_{i}^{\pm 1} | \ i=1, 2, \ldots, n\}$ subject to the relations 
$$E_{i}F_{j} - F_{j}E_{i} = \delta_{ij} \frac{K_{i}-K_{i}^{-1}}{q-q^{-1}}, \hspace{5mm} \forall i, j,$$
the relations (\ref{eq:quantumosprelations2}) and (\ref{eq:quantumosprelations3}) 
replacing $e_{i}$, $f_{i}$ and $k_{i}^{\pm 1}$ with $E_{i}$, $F_{i}$ and $K_{i}^{\pm 1}$, respectively,
and the quantum Serre relations which can be found in \cite{zsuper}.

\subsubsection{Representations of $U_{q}(so({2n+1}))$}
\label{subsubsect:repsofthequantumalgebra}

At generic $q$, each $\lambda \in P^{+}$ is the highest weight of 
a finite dimensional irreducible $U_{-q}(so({2n+1}))$ representation $\pi_{\lambda}$.  
The dimension of $\pi_{\lambda}$ is equal to the dimension of the irreducible $U_{q}(osp(1|2n))$ 
representation $\overline{\pi}_{\lambda}$. 
We also denote the module of $\pi_{\lambda}$ by $V_{\lambda}$. 

Let $V$ denote the module for both the $({2n+1})$-dimensional irreps of 
$U_{-q}(so({2n+1}))$ and $U_{q}(osp(1|2n))$.
Whenever $V$ is considered to be a $U_{q}(osp(1|2n))$-module, 
we takes the grading of its highest weight vector to be {\it odd}.

Let $\pi_{W}$ be a representation of $U_{q}(so({2n+1}))$ with corresponding module $W$ and 
let $X \in \mathrm{End}_{\mathbb{C}}(W)$. 
The quantum trace of $X$ is defined to be $\mathrm{tr}_{q}(X) 
 = \mathrm{tr}\big(\pi_{W}(K_{2\rho}) \circ X\big),$
where $\mathrm{tr}$ is the usual trace and
$K_{2\rho}$ is a product of the $K_{i}$'s such that 
$K_{2\rho}E_{i}K_{2\rho}^{-1} = q^{(2\rho, \alpha_{i})}E_{i}$ for all $i$.
We define the quantum dimension of $\pi_{W}$ to be the quantum trace of the identity map on $W$:
$$\mathrm{dim}_{q}(\pi_{W}) = \mathrm{tr}_{q}(\mathrm{id}_{W}).$$
The $2\rho$ in $U_{q}(so({2n+1}))$ is identical to the $2\rho$ in $U_{q}(osp(1|2n))$.

\subsection{Braid group representations and Markov traces from $U_{q}(osp(1|2n))$ and
$U_{-q}(so({2n+1}))$}
\label{subsect:markovtracesfromalgebras}

\subsubsection{Braid group representations from $U_{q}(osp(1|2n))$ and
$U_{-q}(so({2n+1}))$}

For all integral dominant weights $\lambda$ and $\mu$ there exist invertible maps
$R^{so({2n+1})}_{\lambda \mu} \in \mathrm{End}_{\mathbb{C}}(V_{\lambda} \otimes V_{\mu})$
satisfying 
\begin{eqnarray*}
R^{so({2n+1})}_{\lambda \mu} \cdot (\pi_{\lambda} \otimes \pi_{\mu}) \Delta(x)  & = & 
(\pi_{\lambda} \otimes \pi_{\mu}) \Delta'(x) \cdot R^{so({2n+1})}_{\lambda \mu}, \\
& & \hspace{30mm} \forall x \in U_{-q}(so({2n+1})),
\end{eqnarray*}
where $\Delta' = P \circ \Delta$ is the opposite co-multiplication.
For each such $\lambda$, the map 
\begin{equation}
\label{eq:asdfjkl;}
\check{R}^{so({2n+1})}_{\lambda \lambda} = P \circ R^{so({2n+1})}_{\lambda \lambda}
\end{equation}
commutes with the action of $U_{-q}(so({2n+1}))$:
\begin{eqnarray*}
\check{R}^{so({2n+1})}_{\lambda \lambda} \cdot (\pi_{\lambda} \otimes \pi_{\lambda}) \Delta(x) & = & 
(\pi_{\lambda} \otimes \pi_{\lambda}) \Delta(x) \cdot \check{R}^{so({2n+1})}_{\lambda \lambda}, \\
& & \hspace{30mm} 
\forall x \in U_{-q}(so({2n+1})).
\end{eqnarray*}

Similarly, it was shown in \cite{blumen05} that 
for all integral dominant weights $\lambda$ and $\mu$
there exist maps 
$R^{osp(1|2n)}_{\lambda \mu} \in \mathrm{End}_{\mathbb{C}}(V_{\lambda} \otimes V_{\mu})$ and
\begin{equation}
\label{eq:qwerpoiu}
\check{R}^{osp(1|2n)}_{\lambda \lambda} = P \circ R^{osp(1|2n)}_{\lambda \lambda},
\end{equation}
where $P$ is the graded permutation operator, satisfying
\begin{eqnarray*}
R^{osp(1|2n)}_{\lambda \mu} \cdot (\overline{\pi}_{\lambda} \otimes \overline{\pi}_{\mu}) \Delta(x) & = &  
(\overline{\pi}_{\lambda} \otimes \overline{\pi}_{\mu}) \Delta'(x) \cdot R^{osp(1|2n)}_{\lambda \mu}, \\
\check{R}^{osp(1|2n)}_{\lambda \lambda} 
\cdot (\overline{\pi}_{\lambda} \otimes \overline{\pi}_{\lambda}) \Delta(x) & = &  
(\overline{\pi}_{\lambda} \otimes \overline{\pi}_{\lambda}) \Delta(x) \cdot
 \check{R}^{osp(1|2n)}_{\lambda \lambda}, \\
& & \hspace{30mm} 
\forall x \in U_{q}(osp(1|2n)).
\end{eqnarray*}

We can now define representations of $B_{f}$, the braid group on $f$ strings, in the usual way.
Let $\{\sigma_{i}^{\pm 1} | \ i=1, \ldots, f-1\}$ be the generators of $B_{f}$ as shown in Figure 1. 
Fix $k \geq f$ to be an integer, 
then for each $i=1, \ldots, k-1$ and each $A \in \{so({2n+1}), osp(1|2n)\}$, fix
$$\left(\check{R}^{A}_{\lambda \lambda}\right)_{i}^{\pm 1} = 
\mathrm{id}^{\otimes (i-1)} \otimes 
\left(\check{R}^{A}_{\lambda \lambda}\right)^{\pm 1} \otimes
\mathrm{id}^{\otimes (k-i-1)}.$$
Then the homomorphisms
\begin{equation}
\label{eq:homomorphismdefrep}
\rho^{A}_{\lambda}: \sigma_{i}^{\pm 1} \mapsto 
\left(\check{R}^{A}_{\lambda \lambda}\right)_{i}^{\pm 1},
\end{equation}
define representations of $B_{f}$ in 
$\mathrm{End}_{U_{-q}(so({2n+1}))}(V^{\otimes k})$ and
$\mathrm{End}_{U_{q}(osp(1|2n))}(V^{\otimes k})$.

\subsubsection{Braid group representations from Markov traces on $U_{q}(osp(1|2n))$ and
$U_{-q}(so({2n+1}))$ vector irreps}

We now detail the Markov traces that we define on the representations of $B_{f}$ given in (\ref{eq:homomorphismdefrep}).
For $A = so({2n+1}), osp(1|2n)$, let $\check{R}^{A}_{V V}$ be the map given in (\ref{eq:asdfjkl;})--(\ref{eq:qwerpoiu}) where
\begin{eqnarray}
\label{eq:Rmatrixso}
 & & 
\check{R}^{so({2n+1})}_{V V} \in \mathrm{End}_{U_{-q}(so({2n+1}))}(V \otimes V),  \\
\label{Rmatrixosp}
 & & \check{R}^{osp(1|2n)}_{V V} \in \mathrm{End}_{U_{q}(osp(1|2n))}(V \otimes V),
\end{eqnarray}
and define ${\cal{C}}_{f}^{A}$ to be the complex algebra generated by 
$\left\{\left(\check{R}^{A}_{V V}\right)_{i}^{\pm 1} \Big| \ i=1, 2, \ldots, f-1 \right\}$.
In addition, define the maps $\psi^{A}: {\cal{C}}_{f}^{A} \rightarrow \mathbb{C}$ by
\begin{eqnarray}
\psi^{so({2n+1})}(X_{1}) & = & \frac{\mathrm{tr}_{-q}(X_{1})} {\big(\mathrm{dim}_{-q}(V) \big)^{f}}, \hspace{5mm}
\forall X_{1} \in {\cal{C}}_{f}^{so({2n+1})}, \label{eq:somarkovtrace} \\
\psi^{osp(1|2n)}(X_{2}) & = & \frac{\mathrm{str}_{q}(X_{2})} {\big(\mathrm{sdim}_{q}(V) \big)^{f}},  \hspace{5mm}
\forall X_{2} \in {\cal{C}}_{f}^{osp(1|2n)}, \label{eq:ospmarkovtrace}
\end{eqnarray}
where we recall that $\mathrm{dim}_{-q}(V)= \frac{-q^{2n}+q^{-2n}}{q-q^{-1}}+1$ is the quantum dimension of the
$U_{-q}(so({2n+1}))$-module $V$ and note that $\mathrm{sdim}_{q}(V) = \mathrm{dim}_{-q}(V)$.

It is well known that $\psi^{so({2n+1})}$ is a Markov trace and it was shown in \cite{blumen05}
that $\psi^{osp(1|2n)}$ is also a Markov trace, i.e. both $\psi^{so({2n+1})}$ and $\psi^{osp(1|2n)}$ satisfy Eqs. (\ref{eq:markov1})--(\ref{eq:markov3}) upon substituting them for $\psi$ \cite{blumen05, tw}.

\subsection{Quantum link invariants $\Phi^{so({2n+1})}_{L}(-q)$ and $\Phi^{osp(1|2n)}_{L}(q)$}

As $\psi^{so({2n+1})}$ and $\psi^{osp(1|2n)}$ are Markov traces,
Eq. (\ref{eq:link_polynomial}) defines the uncoloured quantum link invariants $\Phi^{so({2n+1})}_{L}(-q)$ and $\Phi^{osp(1|2n)}_{L}(q)$ obtained by substituting $\psi^{so({2n+1})}$ and 
$\psi^{osp(1|2n)}$ for $\psi$ in Eq. (\ref{eq:link_polynomial}), respectively, and using the representations of the braid group given in Eq. (\ref{eq:homomorphismdefrep}).

In the next section we define the Kauffman polynomial  
from the unspecialised Birman--Wenzl--Murakami algebra $BWM_{f}$ and detail the connections 
between the Kauffman link invariant and the quantum link invariants 
$\Phi^{so({2n+1})}_{L}(-q)$ and $\Phi^{osp(1|2n)}_{L}(q)$.

\section{Birman--Wenzl--Murakami algebras and the Kauffman link polynomial}
We now discuss the unspecialised and specialised 
Birman--Wenzl--Murakami algebras \cite{bw,murakami}. 
These algebras have natually defined trace functionals that we use to define
the Kauffman link invariant \cite{w2}.

\subsection{Unspecialised Birman--Wenzl--Murakami algebra $BWM_{f}$}
\subsubsection{Definition}
Let $f \geq 2$ be an integer and $r, s$ indeterminates.
Let $\mathbb{C}(r,s)$ be the field of rational polynomials in $r$ and $s$ with complex coefficients.
The unspecialised Birman--Wenzl--Murakami algebra
$BWM_{f}$ \cite{w2} is an associative algebra taken over $\mathbb{C}(r,s)$ that is generated by 
the invertible elements $\{ g_{i} | \ 1 \leq i \leq f-1 \}$ subject to the relations
\newline
$$
\begin{array}{ll}
g_{i} g_{j} = g_{j} g_{i}, 				 & |i-j| >1, 	\\
g_{i} g_{i+1} g_{i} = g_{i+1} g_{i} g_{i+1}, 	 & 1 \leq i \leq f-2, 	\\
e_{i} g_{i} = r^{-1} e_{i},                      & 1 \leq i \leq f-1, 	\\
e_{i}g_{i-1}^{\pm 1}e_{i} = r^{\pm 1} e_{i}, 	 & 1 \leq i \leq f-1,
\end{array} $$
\newline
where each $e_{i}$ is defined by
$$(s-s^{-1})(1-e_{i}) = g_{i}-g_{i}^{-1}, \hspace{10mm}  1 \leq i \leq f-1.$$
Each $g_{i}$ also satisfies
$(g_{i} - r^{-1})(g_{i} + s^{-1})(g_{i} - s) = 0$.

\subsubsection{Trace functional on $BWM_{f}$}
$BWM_{f}$ is equipped with a functional 
$\mathrm{tr}: BWM_{f} \rightarrow \mathbb{C}(r,s)$ satisfying \cite{w2}:
\begin{equation}
\label{eq:tracefunctionalnew}
\mathrm{tr}(a \chi b) = \mathrm{tr}(\chi) \mathrm{tr}(ab), 
\hspace{10mm} \forall a, b \in BWM_{f-1}, \hspace{5mm} \chi \in \{g_{f-1}, e_{f-1}\},
\end{equation}
where we regard each element of $BWM_{f-1}$ as an element of $BWM_{f}$
under the canonical inclusion (i.e. we take $g_{i} \in BWM_{f-1}$ as an element of
$BWM_{f}$ via ${g_{i} \hookrightarrow g_{i}}$). 
We recall the definition of the trace functional $\mathrm{tr}$ 
in section \ref{sec:technicalresults} 
and the well-known result that we can use $\mathrm{tr}$ to construct link invariants in 
subsection \ref{sec:linkinvariants}.

\subsection{Link invariants from $BWM_{f}$}
\label{sec:linkinvariants}

The Kauffman link invariant can be defined using $BWM_{f}$ as follows \cite{w2}.
Let a link $L$ be presented as a braid on $f$ strings
with corresponding braid group element 
$b = \sigma_{i_{1}}^{m_{1}} \sigma_{i_{2}}^{m_{2}} \cdots \sigma_{i_{j}}^{m_{j}} \in B_{f}$, 
where $m_{k} \in \mathbb{Z}$ for each $k$,
 and let $\beta \in BWM_{f}$ be the image of $b$ under the homomorphism
$\sigma_{i}^{\pm 1} \mapsto g_{i}^{\pm 1}$.
Define $e(b) = \sum_{k} m_{k}$ and
$\widehat{\mathrm{tr}}(b) = r^{-e(b)} \mathrm{tr}(\beta)$, then the Kauffman link invariant 
of $L$ is
\begin{equation}
\label{definitionoflininvariant} 
F_{L}(r,s) = \widehat{\mathrm{tr}}(\sigma_{1})^{1-f}\widehat{\mathrm{tr}}(b).
\end{equation}

We now show that $F_{L}(r,s)$ is a particular example of the link invariant $\widetilde{F}(L)$ defined using (\ref{eq:link_polynomial}).
The homomorphism $\rho: \sigma_{i}^{\pm 1} \mapsto g_{i}^{\pm 1} \in BWM_{f}$ yields a representation
of $B_{f}$, and $\mathrm{tr} \circ \rho$ is a functional satisfying the three Markov properties as follows (corresponding to Eqs. (\ref{eq:markov1})--(\ref{eq:markov3})):
\begin{itemize}
\item[(i)] $\mathrm{tr}(ab) =\mathrm{tr}(ba)$, 
$\forall a, b \in BWM_{f}$,
\item[(ii)] $\mathrm{tr}(ag_{f-1}) = \mathrm{tr}(g_{f-1})\mathrm{tr}(a) = \frac{r}{x} \mathrm{tr}(a)$, 
$\forall a \in BWM_{f-1}$,
\item[(iii)] $\mathrm{tr}(ag_{f-1}^{-1}) = \mathrm{tr}(g_{f-1}^{-1})\mathrm{tr}(a) = \frac{r^{-1}}{x} \mathrm{tr}(a)$, 
$\forall a \in BWM_{f-1}$.
\end{itemize}
It then follows that $\mathrm{tr} \circ \rho$ can be used to construct a link invariant $\widetilde{F}(L)$ following (\ref{eq:link_polynomial}): 
let $L$ be a link that is presented as the closure of a braid with corresponding braid group element $b \in B_{f}$. Then the link invariant $\widetilde{F}(L)$ for $L$ is
$$\widetilde{F}(L) = x^{f-1}r^{-e(b)} \mathrm{tr}(\rho(b)),$$
which equals the right hand side of (\ref{definitionoflininvariant}).

\subsection{Specialised Birman--Wenzl--Murakami algebra $BWM_{f}(t,q)$}
\subsubsection{Definition and trace functional}
We denote by $BWM_{f}(t,q)$ the algebra obtained by formally replacing the indeterminates $r$ and $s$ in $BWM_{f}$ with the complex numbers $t$ and $q$, respectively.

$BWM_{f}(t,q)$ is equipped with a functional 
$\mathrm{tr}: BWM_{f}(t,q) \rightarrow \mathbb{C}$ satisfying
\begin{equation}
\label{eq:tracefunctional}
\mathrm{tr}(a \chi b) = \mathrm{tr}(\chi) \mathrm{tr}(ab), 
\hspace{10mm} \forall a, b \in BWM_{f-1}(t,q), \hspace{5mm} \chi \in \{g_{f-1}, e_{f-1}\},
\end{equation}
where we regard each element of $BWM_{f-1}(t,q)$ as an element of $BWM_{f}(t,q)$
under the canonical inclusion.

\subsubsection{Representations of $BWM_{f}(-q^{2n},q)$ and $BWM_{f}(q^{2n},-q)$ from 
$U_{q}(osp(1|2n))$ and $U_{q}(so({2n+1}))$}

Certain representations of $U_{q}(sp(2n))$, $U_{q}(so({2n+1}))$ and $U_{q}(osp(1|2n))$ 
yield representations of different specialisations of $BWM_{f}(t,q)$. 
In this subsection, we recall how
representations of $U_{q}(osp(1|2n))$ yield representations of
$BWM_{f}(-q^{2n},q)$ \cite{blumen05} and how representations of $U_{-q}(so({2n+1}))$ yield 
representations of $BWM_{f}(q^{2n},-q)$ \cite{tw}. 

Fix $q$ to be generic and non-zero in this rest of this section and in 
section \ref{subsection:connectionbetween}. Recall the maps 
$\check{R}^{so({2n+1})}_{V V}$ and $\check{R}^{osp(1|2n)}_{V V}$ 
from (\ref{eq:Rmatrixso}) and (\ref{Rmatrixosp}), respectively.
The homomorphism
$$\displaystyle{ \rho^{so({2n+1})}_{V}: g_{i}^{\pm 1} \mapsto 
\left(\check{R}^{so({2n+1})}_{V V}\right)_{i}^{\pm 1} }$$
yields a representation of $BWM_{f}(q^{2n},-q)$ \cite{tw},
and the homomorphism
$$\displaystyle{ \Upsilon: g_{i}^{\pm 1} \mapsto
-\left(\check{R}^{osp(1|2n)}_{V V}\right)_{i}^{\pm 1} }$$
yields a representation of $BWM_{f}(-q^{2n},q)$ \cite{blumen05}.

\subsubsection{Kauffman link invariants from $BWM_{f}(-q^{2n},q)$ and $BWM_{f}(q^{2n},-q)$}

Kauffman link invariants $F_{L}(-q^{2n},q)$ and $F_{L}(q^{2n},-q)$
can be respectively defined from $BWM_{f}(-q^{2n},q)$ and $BWM_{f}(q^{2n},-q)$ following (\ref{definitionoflininvariant}). 
The only matters that need to be considered are that 
the image $\beta(-q^{2n},q)$ of $b \in B_{f}$ under the homomorphism
$\sigma_{i}^{\pm 1} \mapsto g_{i}^{\pm 1} \in BWM_{f}(-q^{2n},q)$ is well-defined as is
the image $\beta(q^{2n},-q)$ of $b$ under the homomorphism
$\sigma_{i}^{\pm 1} \mapsto g_{i}^{\pm 1} \in BWM_{f}(q^{2n},-q)$, both of which are true.

\subsection{Connections between $\Phi^{osp(1|2n)}_{L}(q)$,
$\Phi^{so({2n+1})}_{L}(-q)$  and $F_{L}(-q^{2n},q)$, $F_{L}(q^{2n},-q)$}
\label{subsection:connectionbetween}
Recall the definitions of the Markov traces 
$\psi^{so({2n+1})}$ and $\psi^{osp(1|2n)}$ from (\ref{eq:somarkovtrace})--(\ref{eq:ospmarkovtrace}) and the trace functional $\mathrm{tr}$ on $BWM_{f}(-q^{2n},q)$ and $BWM_{f}(q^{2n},-q)$. 

We can now prove Theorem \ref{th:quantuminvariantequaltokauffman}, which states that
for an arbitrary link $L$ and each positive integer $n$,
\begin{itemize}
\item[(i)] $\Phi^{osp(1|2n)}_{L}(q) = F_{L}(-q^{2n},q)$, and 
\item[(ii)] $\Phi^{so({2n+1})}_{L}(-q) = F_{L}(q^{2n},-q)$.
\end{itemize}

\begin{proof}
Eq. (\ref{eq:A}) was proved in \cite{blumen05} and Eq.(\ref{eq:B}) is well-known:
\begin{eqnarray}
\psi^{osp(1|2n)} \big(\Upsilon(a) \big) & = & \mathrm{tr}(a),
 \hspace{10mm} \forall a \in BWM_{f}(-q^{2n},q), \label{eq:A} \\
\psi^{so({2n+1})} \left(\rho^{so({2n+1})}_{V}(a) \right) & = & \mathrm{tr}(a), 
\hspace{10mm} \forall a \in BWM_{f}(q^{2n},-q). \label{eq:B}
\end{eqnarray}
We firstly prove (i) of the theorem, the proof of (ii) is similar and will be omitted.
The invariant $F_{L}(-q^{2n},q)$ arises from applying  
$\mathrm{tr} \circ \rho(-q^{2n},q)$ in (\ref{eq:link_polynomial}) where
$\rho(-q^{2n},q): B_{f} \rightarrow BWM_{f}(-q^{2n},q)$ is a representation defined by the homomorphism $\sigma_{i}^{\pm 1} \mapsto g_{i}^{\pm 1}$ and $\mathrm{tr}$ is the trace 
functional on $BWM_{f}(-q^{2n},q)$.

The quantum link invariant $\Phi^{osp(1|2n)}_{L}(q)$ arises from applying 
$\psi^{osp(1|2n)} \big(\Upsilon(a) \big) \circ \rho(-q^{2n},q)$ in (\ref{eq:link_polynomial}).
The proof of Theorem \ref{th:quantuminvariantequaltokauffman}(i) then follows from (\ref{eq:A}).

\end{proof}
 
Restating Theorem \ref{th:quantuminvariantequaltokauffman}, the quantum link invariant $\widetilde{F}(L)$ obtained by colouring each component
of the link $L$ with the $({2n+1})$-dimensional irreducible representation of $U_{q}(osp(1|2n))$ 
(resp. $U_{-q}(so({2n+1}))$) 
is {\it identical} to the Kauffman link invariant $F_{L}(-q^{2n},q)$ (resp. $F_{L}(q^{2n},-q)$).

We now prove Theorem \ref{th:equalityofkauffman}, which states: 
Let $L(m)$ be a link presented as the canonical closure of a braid
with corresponding braid group element $(\sigma_{1})^{m}$ for $m \in \mathbb{Z}$.
Then $F_{L(m)}(-r,-s) = F_{L(m)}(r,s)$ and $\Phi^{osp(1|2n)}_{L(m)}(q)=\Phi^{so({2n+1})}_{L(m)}(-q)$.
\begin{proof}
Lemma \ref{lem:positivenegativelinkpolynomials} gives 
$F_{L(m)}(-r,-s) = F_{L(m)}(r,s)$ and Theorem \ref{th:quantuminvariantequaltokauffman} completes the proof.
\end{proof}

\begin{lemma}
\label{lem:positivenegativelinkpolynomials}
$F_{L(m)}(-r,-s) = F_{L(m)}(r,s)$
for each link $L(m), m \in \mathbb{Z}$, where $L(m)$ is the closure of a braid with corresponding
braid group element $(\sigma_{1})^{m}$.
\end{lemma}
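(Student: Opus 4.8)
The plan is to reduce the statement to a direct computation of the Kauffman invariant for the two-strand link $L(m)$, which is presented as the closure of $(\sigma_1)^m \in B_f$. Since only the single braid generator $\sigma_1$ appears, the relevant element of $BWM_f(r,s)$ is $g_1^m$, and the entire computation lives inside $BWM_2(r,s)$ (extended trivially by the identity on the remaining strands). The first step is therefore to find a closed expression for $g_1^m$ as a $\mathbb{C}(r,s)$-linear combination of the natural basis $\{1, g_1, e_1\}$ of $BWM_2(r,s)$; this is possible because the cubic relation $(g_1 - r^{-1})(g_1 + s^{-1})(g_1 - s) = 0$ shows that $g_1$ generates an at-most-three-dimensional algebra, so there exist coefficients $a_m(r,s)$, $b_m(r,s)$, $c_m(r,s)$ with
\begin{equation}
\label{eq:gexpansion}
g_1^m = a_m(r,s)\, 1 + b_m(r,s)\, g_1 + c_m(r,s)\, e_1.
\end{equation}

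Next I would evaluate $\widehat{\mathrm{tr}}(g_1^m) = r^{-m}\,\mathrm{tr}(g_1^m)$ by applying the trace functional $\mathrm{tr}$ to \eqref{eq:gexpansion}, using the Markov properties in \eqref{eq:tracefunctionalnew} together with the known scalar values $\mathrm{tr}(g_1)$, $\mathrm{tr}(e_1)$, and $\mathrm{tr}(1)$. Assembling the normalisation factor $\widehat{\mathrm{tr}}(\sigma_1)^{1-f}$ from \eqref{definitionoflininvariant}, this yields an explicit rational function $F_{L(m)}(r,s)$ in $r$ and $s$. The crucial observation to verify is then a \emph{parity} statement: after the substitution $(r,s) \mapsto (-r,-s)$, each of the coefficient functions $a_m, b_m, c_m$ and the trace values should pick up sign factors that, once combined with the sign change in the normalisation factor $r^{-e(b)} = r^{-m}$ and in $\widehat{\mathrm{tr}}(\sigma_1)^{1-f}$, cancel out completely. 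Concretely I expect $g_1^{-1} - g_1 = (s^{-1} - s)(1 - e_1)$ and the defining relations to be invariant, in an appropriate graded sense, under simultaneously negating $r$ and $s$, so that the homogeneity degrees conspire to give $F_{L(m)}(-r,-s) = F_{L(m)}(r,s)$.

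The main obstacle I anticipate is tracking the sign bookkeeping cleanly rather than the algebra itself. The defining relations of $BWM_f$ are not individually invariant under $(r,s)\mapsto(-r,-s)$ — for instance $e_i g_i = r^{-1} e_i$ becomes $e_i g_i = -r^{-1} e_i$ — so one cannot naively claim the algebra is unchanged. The right approach is to introduce a rescaling, replacing $g_i$ by $-g_i$ (equivalently tracking how $e_i$, which is built from $g_i - g_i^{-1}$ via $(s - s^{-1})(1 - e_i) = g_i - g_i^{-1}$, transforms), and to show that this rescaling is exactly absorbed by the power $m$ of the generator and by the framing normalisation $r^{-e(b)}$. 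In other words, the sign $(-1)^m$ arising from $(-g_1)^m = (-1)^m g_1^m$ must be matched against the sign arising from $r^{-m} \mapsto (-r)^{-m} = (-1)^{-m} r^{-m}$, while the $e_1$-contributions and the normalisation powers must be checked to be of the correct parity in $s$. Since $L(m)$ uses only one generator, there is no interaction between distinct $g_i$, which is precisely why the argument succeeds here and does not obviously extend to arbitrary links (as the paper notes immediately before the lemma). I would organise the final step as a short case-free verification that all accumulated signs cancel, for every $m \in \mathbb{Z}$ simultaneously, by comparing the two explicit rational functions.
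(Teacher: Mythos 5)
Your proposal is correct and is essentially the paper's own proof: the paper's Lemma \ref{lem:powersofg} supplies precisely your expansion $(g_{1})^{m}=a_{m}(r,s)+b_{m}(r,s)\,g_{1}+c_{m}(r,s)\,e_{1}$ together with parity constraints on the monomials of $a_{m},b_{m},c_{m}$, and the proof of Lemma \ref{lem:positivenegativelinkpolynomials} then computes $\mathrm{tr}\big((g_{1})^{m}\big)=a_{m}+b_{m}\,r\,x^{-1}+c_{m}\,x^{-1}$, notes that $x=\frac{r-r^{-1}}{s-s^{-1}}+1$ is invariant under $(r,s)\mapsto(-r,-s)$, and cancels the signs $a_{m}(-r,-s)=(-1)^{m}a_{m}(r,s)$, $b_{m}(-r,-s)=(-1)^{m+1}b_{m}(r,s)$, $c_{m}(-r,-s)=(-1)^{m}c_{m}(r,s)$ against $(-r)^{-m}$, exactly as you outline. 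The only divergence is in how those sign relations are justified: the paper derives them from the inductive parity statement of Lemma \ref{lem:powersofg}(ii), whereas you propose the rescaling $g_{i}\mapsto -g_{i}$, which indeed carries the defining relations of $BWM_{f}(r,s)$ to those of $BWM_{f}(-r,-s)$ while fixing each $e_{i}$, and is compatible with the trace values $\mathrm{tr}(1)=1$, $\mathrm{tr}(e_{1})=x^{-1}$, $\mathrm{tr}(g_{1}^{\pm 1})=r^{\pm 1}x^{-1}$; applying that isomorphism to the expansion of $(-g_{1})^{m}$ recovers the same three sign relations, so your variant of this step is sound and arguably more conceptual than the paper's induction, though the overall route is the same.
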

\begin{proof}
From Eq. (\ref{definitionoflininvariant}), 
$F_{L(m)}(r,s) = \widehat{\mathrm{tr}}(\sigma_{1})^{1-f}\widehat{\mathrm{tr}}\left((\sigma_{1})^{m}\right)$.
Firstly, note that $\widehat{\mathrm{tr}}(\sigma_{1}) = x^{-1}$ and that 
$x \in BWM_{f}(-r,-s)$ is identical to $x \in BWM_{f}(r,s)$ when both are considered as elements of $\mathbb{C}(r,s)$.
Now $\widehat{\mathrm{tr}}\left((\sigma_{1})^{m}\right) = r^{-m} \mathrm{tr}( (g_{1})^{m} )$ and from  
Lemma \ref{lem:powersofg} we have:
$$\mathrm{tr}( (g_{1})^{m} ) = a_{m}(r,s) + b_{m}(r,s) r x^{-1} + c_{m}(r,s) x^{-1}.$$
By inspection, 
\begin{eqnarray*}
a_{m}(-r,-s) & = & (-1)^{m} a_{m}(r,s), \\
b_{m}(-r,-s) & = & (-1)^{m+1} b_{m}(r,s), \\
c_{m}(-r,-s) & = & (-1)^{m} c_{m}(r,s),
\end{eqnarray*}
and it follows that
\begin{eqnarray*}
F_{L(m)}(-r,-s) & = & x^{f-1} (-r)^{-m} \left( a_{m}(-r,-s) + b_{m}(-r,-s)(-r) x^{-1} + c_{m}(-r,-s) x^{-1} \right) \\ 
                & = & F_{L(m)}(r,s).
\end{eqnarray*}

\end{proof}

\section{The relationship between $F_{L}(-q^{2n},q)$ and $F_{L}(q^{2n},-q)$}
\label{sec:relationshipbetween}

Theorem \ref{th:equalityofkauffman} shows that $F_{L(m)}(-q^{2n},q)=F_{L(m)}(q^{2n},-q)$ for all links $L(m)$.
We now prove Theorem \ref{thm:equalsubstitutionofvariables}, which states:
for each arbitrary link $L$ and each positive integer $n$,
 $\Phi^{osp(1|2n)}_{L}(q)$ and $\Phi^{so({2n+1})}_{L}(-q)$ are equal up a substitution of variables.

\begin{proof}
We prove the result by showing that
$F_{L}(-q^{2n},q)$ can in principle be obtained from $F_{L}(q^{2n},-q)$ (and vice-versa) for all links $L$ by a substitution of variables.
The result for $\Phi^{osp(1|2n)}_{L}(q)$ and $\Phi^{so({2n+1})}_{L}(-q)$ then follows from Theorem \ref{th:quantuminvariantequaltokauffman}.

In this proof we refer to Bratteli diagrams for $BMW_{f}$ and related concepts but 
leave the detail of these to section \ref{sec:brattelidiagrams} as their explanation is lengthy.

We fix $\Omega_{f}$ to be the set of pairs $(R,S)$ of paths of length $f$ 
in the Bratteli diagram for $BWM_{f}$ where $shp(R)=shp(S)$, 
where $shp(R)$ is the Young diagram on the $f^{th}$ level of
the Bratteli diagram for $BWM_{f}$ at which the path $R$ ends.
Ram and Wenzl wrote down  
an explicit basis $\{E_{ST} \in BWM_{f} | \ (S,T) \in \Omega_{f} \}$ of $BWM_{f}$ \cite{rw}.
This basis is a set of matrix units, i.e. the basis elements satisfy
$E_{QR}E_{ST} = \delta_{RS} E_{QT}$.

Recall from (\ref{definitionoflininvariant}) that $F_{L}(r,s)$ is obtained by multiplying together weighted traces of
particular elements of $BWM_{f}$. Writing each element $X \in BWM_{f}$ as a linear combination of matrix units:
$$\displaystyle{X = \sum_{(S,T) \in \Omega_{f}} c_{ST} E_{ST}, \ 
c_{ST} \in \mathbb{C}(r,s)},$$
the trace of $X$ is 
$$\displaystyle{ \mathrm{tr}(X) = \sum_{(S,T) \in \Omega_{f}} c_{ST} \mathrm{tr}(E_{ST}) },$$
where $\mathrm{tr}(E_{SS}) \neq 0$ for all $(S,S) \in \Omega_{f}$ from Lemma \ref{lemtraceofmatrixunit}.
Given such an element $X$, we fix
$$\displaystyle{X(t,q) = \sum_{(S,T) \in \Omega_{f}} c_{ST}(t, q) E_{ST}(t, q) }$$
to be the corresponding element of
$BWM_{f}(t,q)$ obtained by replacing the indeterminates $r$ and $s$ in $X$ with the complex numbers
$t$ and $q$, respectively.

We fix  
$\Omega_{f}(-q^{2n},q)$ to be the set of pairs $(R,S)$ 
of paths of length $f$ in the truncated Bratteli diagram for the semisimple algebra
$BWM_{f}(-q^{2n},q)/J_{f}(-q^{2n},q)$ where $shp(R)=shp(S)$.  We similarly define $\Omega_{f}(q^{2n},-q)$.

Note that $\Omega_{f}(-q^{2n},q)=\Omega_{f}(q^{2n},-q)$, which arises from
the result in Lemma \ref{lem:JCBsong} that $Q_{\lambda}(-q^{2n},q) = Q_{\lambda}(q^{2n},-q)$.

Let $X \in BWM_{f}$ be any element where each of 
$X(-q^{2n},q)$ and $X(q^{2n},-q)$ is well-defined.
Then
\begin{eqnarray}
\mathrm{tr}(X)\big|_{(r,s)=(-q^{2n},q)} = \mathrm{tr}\big(X(-q^{2n},q)\big) 
 & = & \sum_{(S,T) \in \Omega_{f}(-q^{2n},q)} 
c_{ST}(-q^{2n},q) \mathrm{tr}(E_{ST}(-q^{2n},q)) \nonumber \\
 & = & \left. \left( \sum_{(S,T) \in \Omega_{f}(-q^{2n},q)} c_{ST} \mathrm{tr}(E_{ST})  \right) 
\right|_{(r,s)=(-q^{2n},q)} \label{eq:1stequation}
\end{eqnarray}
as $\mathrm{tr}(E_{SS}) \big|_{(r,s)=(-q^{2n},q)} = 0$ if 
$(S,S) \notin \Omega_{f}(-q^{2n},q)$, and similarly
\begin{equation}
\label{eq:2ndequation}
\mathrm{tr}(X)\big|_{(r,s)=(q^{2n},-q)} = 
\mathrm{tr}\big(X(q^{2n},-q)\big) 
 = \left. \left( \sum_{(S,T) \in \Omega_{f}(q^{2n},-q)} 
c_{ST} \mathrm{tr}(E_{ST})  \right) 
\right|_{(r,s)=(q^{2n},-q)}.
\end{equation}
Note that the sums on the right hand sides of (\ref{eq:1stequation}) and 
(\ref{eq:2ndequation}) are over the same sets. 

We rewrite parts of Eqs. (\ref{eq:1stequation}) and (\ref{eq:2ndequation}):
\begin{eqnarray}
\nonumber
\left. \left( \sum_{(S,T) \in \Omega_{f}(-q^{2n},q)} c_{ST} \mathrm{tr}(E_{ST})  \right) 
\right|_{(r,s)} & = & 
\left\{ 
\begin{array}{ll}
\displaystyle{\mathrm{tr}\big(X(-q^{2n},q)\big)}, & \displaystyle{\mbox{if } (r,s)=(-q^{2n},q)}, \\
\displaystyle{\mathrm{tr}\big(X(q^{2n},-q)\big)}, & \displaystyle{\mbox{if } (r,s)=(q^{2n},-q)}.
\end{array}
\right. \\
 & & \label{eq:leftandright}
\end{eqnarray}
It follows from (\ref{eq:leftandright}) that it is possible in principle to obtain 
$\mathrm{tr}(X(q^{2n},-q))$ from $\mathrm{tr}(X(-q^{2n},q))$ by applying the mapping
 $(-q^{2n},q) \mapsto (q^{2n},-q)$
(and similarly possible to obtain $\mathrm{tr}(X(-q^{2n},q))$ from $\mathrm{tr}(X(q^{2n},-q))$ by applying the reverse mapping).
However, it may be difficult to do this in practise as $q^{2n}$ and $q$ are not independent:  
the substitution can be expressed as the mapping $q^{m} \mapsto (-q)^{m}$ and $-q^{2n} \mapsto q^{2n}$, however  
the first mapping also gives $(q)^{2n} \mapsto (-q)^{2n}$. 
It follows that the substitution can be directly done if $q^{2n}$ does not appear in $\mathrm{tr}(X(-q^{2n},q))$ 
or if the left hand side of Eq. (\ref{eq:leftandright}) is explicitly known.  Similar considerations hold for 
applying the mapping $(q^{2n},-q) \mapsto (-q^{2n},q)$ to $\mathrm{tr}(X(q^{2n},-q))$ to obtain
$\mathrm{tr}(X(-q^{2n},q))$.

It follows that there is an abstract symmetry between 
$F_{L}(-q^{2n},q)$ and $F_{L}(q^{2n},-q)$ given by mapping between the relevant pairs of signed powers of $q$.
However, it may not be possible to directly obtain one of the invariants from the other 
by applying the relevant mappings without additional knowledge of the traces of certain elements in $BMW_{f}$.
\end{proof}

Similar results will hold for any Kauffman link invariants $F_{L}(r,s)$ and $F_{L}(r',s')$
where the truncated Bratteli diagrams for the relevant semisimple quotients of $BWM_{f}(r,s)$ and
$BWM_{f}(r',s')$ are identical.

\subsection{The case for $q$ a root of unity}
We have not considered the relationship between the relevant quantum link invariants
when $q$ is a root of unity. However, we believe that 
similar results hold for $q$ a root of unity as at generic $q$.
For $q$ a root of unity, the truncated Bratteli diagram for
$BWM_{f}(\mp q^{2n},\pm q)/J_{f}(\mp q^{2n},\pm q)$ is, for a sufficiently large $f$ 
(depending on the root of unity), a proper subgraph
of the truncated Bratteli diagram for $BWM_{f}(\mp q^{2n},\pm q)/J_{f}(\mp q^{2n},\pm q)$ at generic $q$ \cite{blumen05,w2}.
The fact that this subgraph is proper is intimately related to the existences of the 
truncated dominant Weyl alcoves in the relevant weight spaces of $U_{q}(osp(1|2n))$ and
$U_{-q}(so({2n+1}))$ for $q$ a root of unity.

\section{Bratteli diagrams for Birman--Wenzl--Murakami algebras}
\label{sec:brattelidiagrams}

\subsection{Bratteli diagram for $BWM_{f}$}
Following \cite{w2} we say that an algebra $A$ is semisimple if it is isomorphic
to a direct sum of matrix algebras: $A \cong \bigoplus_{i} M_{k_{i}}(\mathbb{C})$, where
$k_{i} \in \{1, 2, \ldots \}$ and
$M_{j}(\mathbb{C})$ is the algebra of $j \times j$ matrices with complex entries. 
$BWM_{f}$ is semisimple \cite{w2} and its structure can be conveniently represented 
by a Bratteli diagram, which is
an undirected graph encoding information about
a sequence $\mathbb{C} \cong A_{0} \subset A_{1} \subset A_{2} \subset \cdots$ of 
inclusions of finite dimensional semisimple algebras \cite{rw}. 
 
To draw a Bratteli diagram for $BWM_{f}$, 
we firstly need the Young lattice \cite{w2},
which is (almost) identical to the Bratteli diagram for the 
sequence of inclusions of group algebras of the symmetric group:
$\mathbb{C}S_{1} \subset \mathbb{C}S_{2} \subset  \mathbb{C}S_{3} \subset \cdots $.

The vertices of the Young lattice are grouped into levels:
\begin{itemize}
\item[(i)]
each Young diagram with $f \geq 0$ boxes labels a vertex on the $f^{th}$ level of the Young lattice,
\item[(ii)] a vertex $\lambda$ on the $f^{th}$ level is connected to a vertex $\mu$ on the 
$(f+1)^{st}$ level by an edge if and only if $\lambda$ and $\mu$ differ by exactly one box, and
\item[(iii)] the empty Young diagram (containing no boxes) is on the $0^{th}$ level.
\end{itemize}

For each $f$, let $Y_{f}$ be the set of vertices on the $f^{th}$ level of the Young lattice and define
$\displaystyle{ \Gamma_{f} = \bigcup_{f - 2k \geq 0} Y_{f-2k} }$ where $k$ ranges over 
all of $\mathbb{Z}_{+}$.  
$\Gamma_{k}$ is the set of vertices on the $k^{th}$ level of the Bratteli diagram for $BWM_{f}$.
A vertex $\lambda$ on the $k^{th}$ level is connected to a vertex $\mu$ on the $(k+1)^{st}$ level if 
and only if $\lambda$ and $\mu$ differ by exactly one box.
We show the Bratteli diagram for $BWM_{f}$ up to the $4^{th}$ level in
Figure 3.

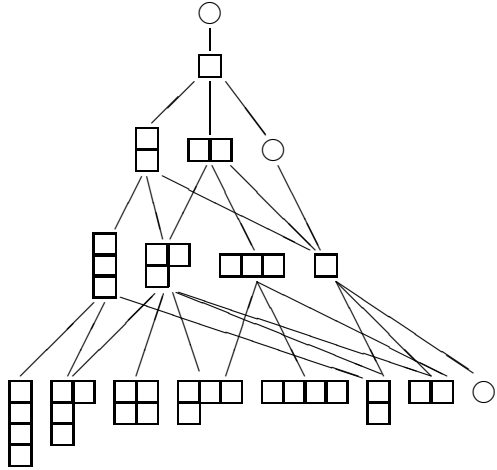
\begin{figure}[hbt]
\label{fig:BWbrattelidiagram}
\begin{center}
\setlength{\unitlength}{0.14mm}
\begin{picture}(460,440) \large\sf
\thinlines

\put(190,430){\circle{20}} 

\put(190,395){\line(0,1){20}} 

\put(180,370){\framebox(20,20){}}

\put(120,300){\framebox(20,20){}}
\put(120,280){\framebox(20,20){}}

\put(170,290){\framebox(20,20){}}
\put(190,290){\framebox(20,20){}}

\put(250,300){\circle{20}} 

\put(80,160){\framebox(20,20){}}
\put(80,180){\framebox(20,20){}}
\put(80,200){\framebox(20,20){}}
\put(100,225){\line(1,2){25}} 
\put(130,275){\line(1,-4){15}} 

\put(130,170){\framebox(20,20){}}
\put(130,190){\framebox(20,20){}}
\put(150,190){\framebox(20,20){}}
\put(152,215){\line(1,2){35}} 
\put(192,285){\line(1,-2){40}}

\put(200,180){\framebox(20,20){}}
\put(220,180){\framebox(20,20){}}
\put(240,180){\framebox(20,20){}}

\put(290,180){\framebox(20,20){}}
\put(210,285){\line(1,-1){80}} 
\put(255,285){\line(1,-2){40}} 
\put(145,275){\line(2,-1){140}} 

\put(0,0){\framebox(20,20){}}
\put(0,20){\framebox(20,20){}}
\put(0,40){\framebox(20,20){}}
\put(0,60){\framebox(20,20){}}

\put(10,85){\line(1,1){70}} 
\put(135,325){\line(1,1){40}} 
\put(190,315){\line(0,1){50}} 
\put(205,365){\line(3,-4){38}} 

\put(60,60){\framebox(20,20){}}
\put(40,20){\framebox(20,20){}}
\put(40,40){\framebox(20,20){}}
\put(40,60){\framebox(20,20){}}
\put(55,85){\line(1,2){35}} 
\put(60,85){\line(1,1){78}} 

\put(100,60){\framebox(20,20){}}
\put(100,40){\framebox(20,20){}}
\put(120,60){\framebox(20,20){}}
\put(120,40){\framebox(20,20){}}
\put(120,85){\line(1,3){26}} 

\put(160,60){\framebox(20,20){}}
\put(160,40){\framebox(20,20){}}
\put(180,60){\framebox(20,20){}}
\put(200,60){\framebox(20,20){}}
\put(155,165){\line(1,-3){25}} 
\put(205,85){\line(1,3){30}} 
\put(105,160){\line(3,-1){230}} 

\put(240,60){\framebox(20,20){}}
\put(260,60){\framebox(20,20){}}
\put(280,60){\framebox(20,20){}}
\put(300,60){\framebox(20,20){}}
\put(235,175){\line(1,-2){45}} 

\put(340,60){\framebox(20,20){}}
\put(340,40){\framebox(20,20){}}
\put(158,165){\line(5,-2){195}} 
\put(310,175){\line(1,-1){90}} 
\put(310,175){\line(1,-2){45}} 
\put(310,175){\line(3,-2){130}} 

\put(380,60){\framebox(20,20){}}
\put(400,60){\framebox(20,20){}}
\put(160,165){\line(3,-1){240}} 
\put(235,175){\line(2,-1){180}} 

\put(450,70){\circle{20}} 

\end{picture}
\caption{The Bratteli diagram for $BWM_{f}$ up to the $4^{th}$ level inclusive}  
\end{center}
\end{figure}

\subsection{A basis of $BWM_{f}$ and matrix units for $BWM_{f}$}
Ram and Wenzl wrote down \cite{rw} 
an explicit basis $\{E_{ST} \in BWM_{f} | \ (S,T) \in \Omega_{f} \}$ of $BWM_{f}$, this notation
we explain below.
This basis is a set of matrix units, i.e. the basis elements satisfy
$E_{QR}E_{ST} = \delta_{RS} E_{QT}$.

We say that $R$ is a path of length $f$ in
the Bratteli diagram for $BWM_{f}$ 
if 
\begin{itemize}
\item[(i)]
$R = (r_{0}, r_{1}, \ldots, r_{f})$ is a sequence of $f+1$ Young diagrams
where $r_{k} \in \Gamma_{k}$ for each $k = 0, 1, \ldots, f$, and 
\item[(ii)] $r_{i}$ is connected by an edge to the vertex $r_{i+1}$ in the Bratteli diagram for 
$BWM_{f}$ for each $i=0, 1, \ldots, f-1$. 
\end{itemize}
We write $shp(R) = r_{f}$ and fix
$\Omega_{f}$ be the set of pairs $(R,S)$ of paths of length $f$ 
in the Bratteli diagram for $BWM_{f}$ where $shp(R)=shp(S)$.

The following lemma \cite[Lemma 4.2]{w2} is important.

\begin{lemma} 
\label{lemtraceofmatrixunit}
let $R$ be a path of length $f$ in the Bratteli diagram for $BWM_{f}$
where $shp(R) = \lambda$.
Then $\mathrm{tr}(E_{RR}) = Q_{\lambda}(r,s)/x^{f}$, 
where $Q_{\lambda}(r,s)$ is the function
given in (\ref{eq:themagixQpolynomial}) and $\displaystyle{x = \frac{r-r^{-1}}{s-s^{-1}} + 1}$.
\end{lemma}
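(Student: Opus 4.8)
The plan is to prove the stated formula by induction on $f$, using the fact that $\mathrm{tr}(E_{RR})$ is really a weight attached to the block $\lambda=shp(R)$ in the semisimple decomposition of $BWM_{f}$, together with the Markov property (\ref{eq:tracefunctionalnew}) of $\mathrm{tr}$. First I would record that $\mathrm{tr}(E_{RR})$ depends only on the shape $\lambda$, not on the particular path $R$: if $R$ and $S$ are paths of length $f$ with $shp(R)=shp(S)=\lambda$, then the matrix-unit relation $E_{QR}E_{ST}=\delta_{RS}E_{QT}$ gives $E_{RS}E_{SR}=E_{RR}$ and $E_{SR}E_{RS}=E_{SS}$, so the trace property $\mathrm{tr}(ab)=\mathrm{tr}(ba)$ yields $\mathrm{tr}(E_{RR})=\mathrm{tr}(E_{RS}E_{SR})=\mathrm{tr}(E_{SR}E_{RS})=\mathrm{tr}(E_{SS})$. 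Writing this common value as $w^{(f)}_{\lambda}$, the goal becomes $w^{(f)}_{\lambda}=Q_{\lambda}(r,s)/x^{f}$.

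Next I would set up the inductive step relating level $f$ to level $f-1$. Fix a path $R=(r_{0},\dots,r_{f-1},r_{f})$ with $r_{f-1}=\mu$ and $r_{f}=\lambda$, and let $R'=(r_{0},\dots,r_{f-1})$ be its truncation, a path of shape $\mu$ with $E_{R'R'}\in BWM_{f-1}$. Following the Ram--Wenzl construction, $E_{RR}$ can be realised as a spectral projection of the compressed element $y=E_{R'R'}\,g_{f-1}\,E_{R'R'}$ inside $E_{R'R'}\,BWM_{f}\,E_{R'R'}$. Because $g_{f-1}$ satisfies the cubic relation $(g_{f-1}-r^{-1})(g_{f-1}+s^{-1})(g_{f-1}-s)=0$, the element $y$ acts on $E_{RR}$ by a definite scalar in $\mathbb{C}(r,s)$ governed by the step $\mu\to\lambda$, i.e.\ by which box is added to or removed from $\mu$ to produce $\lambda$. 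Writing $E_{RR}$ as the corresponding Lagrange-interpolation polynomial in $y$ expresses $\mathrm{tr}(E_{RR})$ as a $\mathbb{C}(r,s)$-linear combination of the quantities $\mathrm{tr}(E_{R'R'}\,g_{f-1}^{k}\,E_{R'R'})$ and $\mathrm{tr}(E_{R'R'}\,e_{f-1}\,E_{R'R'})$; reducing the powers $g_{f-1}^{k}$ to the span of $\{1,g_{f-1},e_{f-1}\}$ via the cubic relation, each term collapses through (\ref{eq:tracefunctionalnew}) and the elementary values $\mathrm{tr}(g_{f-1})=r/x$ and $\mathrm{tr}(e_{f-1})=1/x$ (the latter from the defining relation of $e_{f-1}$) to a scalar multiple of $\mathrm{tr}(E_{R'R'})=w^{(f-1)}_{\mu}$. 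The outcome is a recursion $w^{(f)}_{\lambda}=c_{\mu\to\lambda}(r,s)\,w^{(f-1)}_{\mu}$ with an explicit coefficient carrying a single factor of $x^{-1}$.

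Finally I would close the induction by matching coefficients. With base case $f=0$ (the empty path, $E_{RR}=1$, $\mathrm{tr}(1)=1=Q_{\emptyset}/x^{0}$ with $Q_{\emptyset}=1$), the inductive hypothesis $w^{(f-1)}_{\mu}=Q_{\mu}/x^{f-1}$ combined with the recursion gives $w^{(f)}_{\lambda}=Q_{\lambda}/x^{f}$ exactly when $c_{\mu\to\lambda}(r,s)=Q_{\lambda}(r,s)/\bigl(x\,Q_{\mu}(r,s)\bigr)$. I would verify this as a combinatorial identity directly from the product expression for $Q_{\lambda}$ in (\ref{eq:themagixQpolynomial}): adding or removing a single box multiplies $Q$ by precisely the box factor predicted by the scalar by which $y$ acts, computed in the previous step. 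The main obstacle is the middle paragraph --- reconstructing enough of the Ram--Wenzl matrix-unit machinery to pin down the scalar of $y=E_{R'R'}g_{f-1}E_{R'R'}$ on $E_{RR}$ and the exact interpolation coefficients for each type of step. Once that spectral data is in hand, both the trace reduction via the Markov property and the final product-formula bookkeeping are routine.
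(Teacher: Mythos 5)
First, a point of comparison: the paper does not prove this lemma at all --- it is imported verbatim from Wenzl \cite[Lemma 4.2]{w2} --- so what you are attempting is a reconstruction of the Wenzl/Ram--Wenzl computation of the Markov trace weights. Your outer structure is sound and does match theirs: path-independence of $\mathrm{tr}(E_{RR})$ (via $E_{RS}E_{SR}=E_{RR}$, $E_{SR}E_{RS}=E_{SS}$ and traciality), induction on $f$ through a one-step recursion $w^{(f)}_{\lambda}=c_{\mu\to\lambda}\,w^{(f-1)}_{\mu}$, and a closing combinatorial identity $Q_{\lambda}/Q_{\mu}=x\,c_{\mu\to\lambda}$ read off from (\ref{eq:themagixQpolynomial}). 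The base case and the values $\mathrm{tr}(g_{f-1})=r/x$, $\mathrm{tr}(e_{f-1})=1/x$ are also correct.

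The genuine gap is your middle paragraph, and it is not ``routine once the spectral data is in hand'' --- it is the entire content of the lemma, and two of its steps fail as written. (i) The cubic relation $(g_{f-1}-r^{-1})(g_{f-1}+s^{-1})(g_{f-1}-s)=0$ does \emph{not} govern the scalars by which $y=E_{R'R'}g_{f-1}E_{R'R'}$ acts on the idempotents $E_{RR}$: since $g_{f-1}$ commutes with $BWM_{f-2}$ but not with $E_{R'R'}\in BWM_{f-1}$, the cubic does not survive compression (indeed $y^{2}=E_{R'R'}g_{f-1}E_{R'R'}g_{f-1}E_{R'R'}\neq E_{R'R'}g_{f-1}^{2}E_{R'R'}$), and the diagonal values of $y$ are not drawn from $\{r^{-1},-s^{-1},s\}$ but are path-dependent rational functions involving the contents of the added or removed boxes; computing them \emph{is} the Ram--Wenzl inductive construction. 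Your picture is literally correct only at $f=2$, where $E_{R'R'}=1$ and the $E_{RR}$ really are the spectral projections of $g_{1}$. Pairwise distinctness of these values, which your Lagrange interpolation requires, is likewise unproved. (ii) Even granting the spectral data, the trace reduction does not go through: $\mathrm{tr}(y^{k})=\mathrm{tr}\bigl((E_{R'R'}g_{f-1})^{k}\bigr)$ contains $k$ occurrences of $g_{f-1}$ separated by elements of $BWM_{f-1}$, whereas the Markov property (\ref{eq:tracefunctionalnew}) only collapses words with a \emph{single} $g_{f-1}$ or $e_{f-1}$ between elements of $BWM_{f-1}$. The traces it does give --- $\mathrm{tr}(E_{R'R'})$, $\mathrm{tr}(E_{R'R'}g_{f-1})$, $\mathrm{tr}(E_{R'R'}e_{f-1})$ --- furnish only three linear equations in the unknown weights $w^{(f)}_{\lambda}$, while $\mu$ can have more than three admissible extensions $\lambda$ (e.g.\ $\mu=[2,1]$ has five), so the system underdetermines the weights. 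Closing either point requires the explicit matrix-unit formulas of \cite{rw}, equivalently control of expressions like $\epsilon_{f-1}(g_{f-1}bg_{f-1})$ for $b\in BWM_{f-1}$; with that machinery in hand one is essentially transcribing Wenzl's own argument, which is why the paper simply cites \cite[Lemma 4.2]{w2} rather than reproving it.
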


\subsection{Semisimple quotients of $BWM_{f}(t,q)$}
Define an ideal $J_{f}(t,q) \subset BWM_{f}(t,q)$ with respect to 
$\mathrm{tr}$ by
$$J_{f}(t,q) = \left\{b \in BWM_{f}(t,q) | \ 
\mathrm{tr}(ab) = 0, \ \forall a \in BWM_{f}(t,q)\right\}.$$
If $q$ is nonzero and not a root of unity and $t \neq \pm q^{k}$ for any $k \in \mathbb{Z}$, then
$J_{f}(t,q)=0$ and $BWM_{f}(t,q)$ is semisimple.
If $q$ is not a root of unity and $t = \pm q^{k}$ for some $k \in \mathbb{Z}$, then
$J_{f}(\pm q^{k},q)$ may be non-zero and
the quotient $BWM_{f}(\pm q^{k},q)/J_{f}(\pm q^{k},q)$ is semisimple
(see \cite[Cor. 5.6]{w2} for details). 

In examining the Kauffman link polynomials, 
we will draw on the structures of the semisimple quotients
$BWM_{f}(-q^{2n},q)/J_{f}(-q^{2n},q)$ and $BWM_{f}(q^{2n},-q)/J_{f}(q^{2n},-q)$, which are 
encoded in the relevant truncated Bratelli diagrams. 
These truncated Bratteli diagrams are identical, which arises directly from the fact that
$Q_{\lambda}(-q^{2n},q) = Q_{\lambda}(q^{2n},-q)$ (Lemma \ref{lem:JCBsong}) and
may be related to the fact that $BWM_{f}(-q^{2n},q)$ and $BWM_{f}(q^{2n},-q)$ are isomorphic 
algebras \cite{w2}. 

We now describe how to construct the truncated Bratelli diagram for $BWM_{f}(-q^{2n},q)/J_{f}(-q^{2n},q)$ \cite{w2}:
we similarly obtain the truncated Bratteli diagram for $BWM_{f}(q^{2n},-q)/J_{f}(q^{2n},-q)$.

Let $q$ be generic. Initially, we inductively obtain a subgraph $Y(-q^{2n},q)$ of the Young lattice
as follows:
\begin{itemize}
\item[(i)]  Firstly fix the Young diagram with no boxes to be a vertex in $Y(-q^{2n},q)$.
\item[(ii)] The inductive step is: assume that the Young diagram $\lambda$ is a vertex in
$Y(-q^{2n},q)$ and that the Young diagram $\mu$ differs from $\lambda$ by exactly one box.
Then $\mu$ is also a vertex in $Y(-q^{2n},q)$ if the function $Q_{\mu}(-q^{2n},q)$ 
given in Eq. (\ref{eq:themagixQpolynomial}) is non-zero. 
\end{itemize}
Explicitly from \cite{blumen05,w2}, a Young diagram $\lambda$ is a vertex in $Y(-q^{2n},q)$ if and only if $\lambda_{1}' + \lambda_{2}' \leq {2n+1}$ where $\lambda_{i}'$ is the number of boxes in the 
$i^{th}$ column of $\lambda$ from the left.

The truncated Bratteli diagram for $BWM_{f}(-q^{2n},q)/J_{f}(-q^{2n},q)$ is then the subgraph of 
the Bratteli diagram for $BWM_{f}$ obtained by removing all vertices that do not belong to 
$Y(-q^{2n},q)$. We show the truncated Bratteli diagram for $BWM_{f}(-q^{2},q)/J_{f}(-q^{2},q)$
up to the $4^{th}$ level in Figure 4.

The truncated Bratteli diagram for $BWM_{f}(q^{2n},-q)/J_{f}(q^{2n},-q)$ is obtained in the same
way as is the truncated Bratteli diagram for $BWM_{f}(-q^{2n},q)/J_{f}(-q^{2n},q)$ except that
we replace $(-q^{2n},q)$ with $(q^{2n},-q)$ throughout.
The fact that $Q_{\lambda}(-q^{2n},q) = Q_{\lambda}(q^{2n},-q)$ 
means that the truncated Bratteli diagrams for
$BWM_{f}(-q^{2n},q)/J_{f}(-q^{2n},q)$ and $BWM_{f}(q^{2n},-q)/J_{f}(q^{2n},-q)$
are identical.

\begin{figure}[hbt]
\label{fig:brattelidiagramforquotientalgebra}
\begin{center}
\setlength{\unitlength}{0.14mm}
\begin{picture}(460,440) \large\sf
\thinlines

\put(190,430){\circle{20}} 

\put(190,395){\line(0,1){20}}

\put(180,370){\framebox(20,20){}}

\put(120,300){\framebox(20,20){}}
\put(120,280){\framebox(20,20){}}

\put(170,290){\framebox(20,20){}}
\put(190,290){\framebox(20,20){}}

\put(250,300){\circle{20}}

\put(80,160){\framebox(20,20){}}
\put(80,180){\framebox(20,20){}}
\put(80,200){\framebox(20,20){}}
\put(100,225){\line(1,2){25}} 
\put(130,275){\line(1,-4){15}}

\put(130,170){\framebox(20,20){}}
\put(130,190){\framebox(20,20){}}
\put(150,190){\framebox(20,20){}}
\put(152,215){\line(1,2){35}} 
\put(192,285){\line(1,-2){40}}

\put(200,180){\framebox(20,20){}}
\put(220,180){\framebox(20,20){}}
\put(240,180){\framebox(20,20){}}

\put(290,180){\framebox(20,20){}}
\put(210,285){\line(1,-1){80}} 
\put(255,285){\line(1,-2){40}} 
\put(145,275){\line(2,-1){140}}

\put(135,325){\line(1,1){40}} 
\put(190,315){\line(0,1){50}} 
\put(205,365){\line(3,-4){38}} 

\put(160,60){\framebox(20,20){}}
\put(160,40){\framebox(20,20){}}
\put(180,60){\framebox(20,20){}}
\put(200,60){\framebox(20,20){}}
\put(150,163){\line(1,-3){26}} 
\put(190,85){\line(1,3){30}} 
\put(105,160){\line(3,-1){230}}

\put(240,60){\framebox(20,20){}}
\put(260,60){\framebox(20,20){}}
\put(280,60){\framebox(20,20){}}
\put(300,60){\framebox(20,20){}}
\put(225,175){\line(1,-2){45}} 
\put(95,155){\line(5,-2){175}} 

\put(340,60){\framebox(20,20){}}
\put(340,40){\framebox(20,20){}}
\put(155,165){\line(5,-2){195}} 
\put(310,175){\line(1,-1){90}} 
\put(310,175){\line(1,-2){45}} 
\put(310,175){\line(3,-2){130}} 

\put(380,60){\framebox(20,20){}}
\put(400,60){\framebox(20,20){}}
\put(165,165){\line(3,-1){240}} 
\put(225,175){\line(2,-1){180}} 

\put(450,70){\circle{20}} 

\end{picture}
\caption{The Bratteli diagram for $BWM_{f}(-q^{2},q)/J_{f}(-q^{2},q)$ up to the $4^{th}$ level inclusive}  
\end{center}
\end{figure}
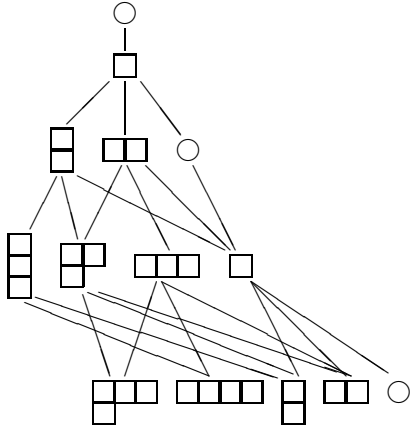

Note that
$BWM_{f}(-q^{2n},q)/J_{f}(-q^{2n},q) \cong \bigoplus_{\lambda} M_{b(\lambda)}(\mathbb{C})$
where the direct sum is over all vertices $\lambda$ on the $f^{th}$ level of the truncated
Bratteli diagram for $BWM_{f}(-q^{2n},q)/J_{f}(-q^{2n},q)$
and $b(\lambda)$ is the number of paths of length $f$ 
in the truncated Bratteli diagram ending at $\lambda$.

We are almost at the point where we can write down a basis for 
$BWM_{f}(-q^{2n},q)/J_{f}(-q^{2n},q)$.
Fix $\Omega_{f}(-q^{2n},q)$ to be the set of pairs $(R,S)$ 
of paths of length $f$ in the truncated Bratteli diagram for
$BWM_{f}(-q^{2n},q)/J_{f}(-q^{2n},q)$ 
where $shp(R)=shp(S)$.
The matrix units 
$$\left\{ E_{RS}(-q^{2n},q) \in BWM_{f}(-q^{2n},q) | \ (R,S) \in \Omega_{f}(-q^{2n},q) \right\},$$
obtained by replacing the indeterminates $r$ and $s$ with the complex numbers $-q^{2n}$ and $q$, 
respectively, in the relevant matrix units of $BWM_{f}$, are all well-defined and non-zero \cite{blumen05}.
It is very important to note that $\mathrm{tr}(E_{SS}(-q^{2n},q)) \neq 0$ 
for all $(S,S) \in \Omega_{f}(-q^{2n},q)$ and that 
$E_{RS}(-q^{2n},q) \notin J_{f}(-q^{2n},q)$ for all  $(R,S) \in \Omega_{f}(-q^{2n},q)$.

\section{Technical results}
\label{sec:technicalresults}

\subsection{Definition of $\mathrm{tr}$}

For each element $a \in BWM_{f+1}$, there exists a unique element
$\epsilon_{f}(a) \in BWM_{f}$ such that
$e_{f+1}ae_{f+1} = x \epsilon_{f}(a)e_{f+1}$ where
$\epsilon_{f}: BWM_{f+1} \rightarrow BWM_{f}$ is a linear map.
The $\mathrm{tr}$ functional on $BWM_{f}$ is then inductively defined by
$\mathrm{tr}(1) = 1$ and 
$\mathrm{tr}(a) = \mathrm{tr}(\epsilon_{f}(a))$ for $a \in BWM_{f+1}$.
In particular, $\mathrm{tr}(e_{i}) = 1/x$ and
$\mathrm{tr}(g_{i}^{\pm 1}) = r^{\pm 1}/x$ for all $i$.

\subsection{Definition of $Q_{\lambda}(r,s)$}

Let $\lambda$ be a Young diagram, let
$(i,j)$  denote the box in the $i^{th}$ row and the $j^{th}$ column of $\lambda$ 
and let $\lambda_{i}$ (resp. $\lambda_{j}'$) 
denote the number of boxes in the $i^{th}$ row (resp. $j^{th}$ column) of $\lambda$.
Denote the Young diagram $\lambda$ by 
$\lambda = [\lambda_{1}, \lambda_{2}, \ldots, \lambda_{k}]$ where the $i^{th}$ row of the Young diagram contains 
$\lambda_{i}$ boxes for each $i=1, 2, \ldots, k$, 
and the $l^{th}$ row contains no boxes for each $l > k$.
The function $Q_{\lambda}(r,s)$ is
\begin{eqnarray}
Q_{\lambda}(r,s) & = & 
\prod_{(j,j) \in \lambda} 
\frac{rs^{\lambda_{j} - \lambda_{j}'}-r^{-1}s^{-\lambda_{j} +\lambda_{j}'}+
s^{\lambda_{j} + \lambda_{j}'-2j+1}-s^{-\lambda_{j} -\lambda_{j}'+2j-1}}
{s^{h(j,j)}-s^{-h(j,j)}} \nonumber \\
& & \hspace{5mm} \times
\prod_{(i,j) \in \lambda, i \neq j} \frac{r s^{d(i,j)}-r^{-1}s^{-d(i,j)}}{s^{h(i,j)}-s^{-h(i,j)}},
\label{eq:themagixQpolynomial}
\end{eqnarray}
where the hooklength $h(i,j)$ is defined by
$h(i,j) = \lambda_{i} - i + \lambda_{j}' - j +1$, and where
$$d(i,j) = \left\{ \begin{array}{ll}
 \lambda_{i} + \lambda_{j}-i-j+1, & \hspace{5mm} \mbox{if } i \leq j, \\
-\lambda_{i}' - \lambda_{j}' + i + j-1, & \hspace{5mm} \mbox{if } i > j.
\end{array} \right.$$
Intuitively, the hooklength $h(i,j)$ is the number of boxes 
in the hook cornered on the box $(i,j)$, i.e. the number of boxes
below the $(i,j)$ box in the $j^{th}$ column 
plus the number of boxes to the right of the $(i,j)$ box in the $i^{th}$ row, plus one.

\subsection{Technical lemmas}

\begin{lemma}
\label{lem:powersofg}
For all integers $f, m \geq 2$, 
\begin{itemize}
\item[(i)]
$(g_{1})^{m} \in BWM_{f}$ can be written as
\begin{equation}
\label{eq:powersofgrelation}
(g_{1})^{m} = a_{m}(r,s) + b_{m}(r,s) g_{1} + c_{m}(r,s)e_{1},
\end{equation}
where $a_{m}(r,s), b_{m}(r,s), c_{m}(r,s) \in \mathbb{Z}[[r,r^{-1},s,s^{-1}]]$, and
\item[(ii)]
\begin{eqnarray*}
a_{m}(r,s) & \in & \left\{ \bigoplus_{i} \mathbb{Z} r^{\gamma_{i}}s^{{\delta}_{i}}
\mbox{ where } (\gamma_{i} + \delta_{i}) \bmod{2} \equiv (m) \bmod{2} \ \forall i
   \right\}, \\
b_{m}(r,s) & \in & \left\{ \bigoplus_{j} \mathbb{Z} r^{\gamma_{j}}s^{{\delta}_{j}}
\mbox{ where } (\gamma_{j} + \delta_{j}) \bmod{2} \equiv (m+1) \bmod{2} \ \forall j
   \right\}, \\
c_{m}(r,s) & \in & \left\{ \bigoplus_{k} \mathbb{Z} r^{\gamma_{k}}s^{{\delta}_{k}}
\mbox{ where } (\gamma_{k} + \delta_{k}) \bmod{2} \equiv (m) \bmod{2} \ \forall k
   \right\}. 
\end{eqnarray*}
\end{itemize}

\end{lemma}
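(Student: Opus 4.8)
The plan is to prove part (i) and the parity statement (ii) simultaneously by induction on $m$, with the whole argument driven by a single quadratic identity for $g_1$ that lets us eliminate both $g_1^{-1}$ and all higher powers in favour of the triple $\{1, g_1, e_1\}$.

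First I would establish the base case $m=2$. Starting from the defining relation $(s-s^{-1})(1-e_1) = g_1 - g_1^{-1}$, I multiply on the right by $g_1$ to obtain $(s-s^{-1})(g_1 - e_1 g_1) = g_1^2 - 1$. The BWM relation $e_1 g_1 = r^{-1} e_1$ then turns the left-hand side into $(s-s^{-1})(g_1 - r^{-1}e_1)$, yielding the key identity
\begin{equation*}
g_1^2 = 1 + (s-s^{-1}) g_1 - r^{-1}(s-s^{-1}) e_1.
\end{equation*}
This exhibits $(g_1)^2$ in the required form with $a_2 = 1$, $b_2 = s-s^{-1}$ and $c_2 = -r^{-1}(s-s^{-1})$, all integer Laurent polynomials, and a direct inspection of the monomials confirms that each coefficient lies in the parity class prescribed by (ii) for $m=2$.

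For the inductive step, assuming $(g_1)^m = a_m + b_m g_1 + c_m e_1$ with the stated parities, I would multiply through by $g_1$ and substitute the quadratic identity for $b_m g_1^2$ together with $e_1 g_1 = r^{-1} e_1$ for $c_m e_1 g_1$. Collecting the coefficients of $1$, $g_1$ and $e_1$ gives the recursions
\begin{align*}
a_{m+1} &= b_m, \\
b_{m+1} &= a_m + (s-s^{-1}) b_m, \\
c_{m+1} &= r^{-1}\bigl(c_m - (s-s^{-1}) b_m\bigr),
\end{align*}
which manifestly keep the coefficients in $\mathbb{Z}[[r,r^{-1},s,s^{-1}]]$. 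The parity bookkeeping then closes the induction: $a_{m+1}=b_m$ has parity $m+1$; in $b_{m+1}$ both $a_m$ (parity $m$) and $(s-s^{-1})b_m$ (parity $1+(m+1)\equiv m$) have parity $m\equiv m+2$; and the prefactor $r^{-1}$ raises the parity $m$ of $c_m - (s-s^{-1})b_m$ to $m+1$ in $c_{m+1}$, exactly matching the classes demanded by (ii) for $m+1$.

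The only place requiring care is the derivation of the quadratic identity, since it is what collapses the naive three-term span $\{1, g_1, g_1^2\}$ coming from the cubic $(g_1-r^{-1})(g_1+s^{-1})(g_1-s)=0$ down to $\{1, g_1, e_1\}$; the essential input is that the multiplicative relation $e_1 g_1 = r^{-1}e_1$ removes the explicit $g_1^{-1}$ introduced by the definition of $e_1$. Everything else is routine coefficient tracking, and the hypotheses $f,m\geq 2$ enter only to guarantee that $g_1$ and $e_1$ genuinely lie in $BWM_f$ and that the induction has a valid starting point.
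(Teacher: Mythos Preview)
Your proof is correct and follows essentially the same route as the paper: both derive the quadratic identity $g_1^2 = 1 + (s-s^{-1})g_1 - r^{-1}(s-s^{-1})e_1$, use it as the base case, and then multiply by $g_1$ to obtain the identical recursions $a_{m+1}=b_m$, $b_{m+1}=a_m+(s-s^{-1})b_m$, $c_{m+1}=r^{-1}\bigl(c_m-(s-s^{-1})b_m\bigr)$. You simply spell out the derivation of the quadratic identity and the parity bookkeeping in (ii) more explicitly than the paper, which leaves those to inspection.
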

\begin{proof}
\begin{itemize}
\item[(i)] 
We firstly note that
$$(g_{1})^{2} = 1 + (s-s^{-1})(g_{1}-r^{-1}e_{1}).$$
Assume now that Eq. (\ref{eq:powersofgrelation}) is true for some $m \geq 2$, then
\begin{eqnarray}
(g_{1})^{m+1} 
   & = & a_{m}(r,s) g_{1} + b_{m}(r,s)(g_{1})^{2} + c_{m}(r,s)e_{1}g_{1} \nonumber  \\
   & = & b_{m}(r,s) + \left(a_{m}(r,s) + b_{m}(r,s)(s-s^{-1})\right)g_{1} \nonumber \\
   &   &      + \left(-b_{m}(r,s)r^{-1}(s-s^{-1}) + c_{m}(r,s)r^{-1}\right)e_{1},
                \label{eq:powersofginductionmplus1}
\end{eqnarray}
proving (i).
\item[(ii)] The result is true for $m=2$ by inspection and follows for all $m \geq 2$
by induction.
\end{itemize}
\end{proof}

\begin{lemma}
\label{lem:JCBsong}
For each Young diagram $\lambda$, 
\begin{equation}
\label{eq:hideandseek}
Q_{\lambda}(-q^{2n},q) = Q_{\lambda}(q^{2n},-q).
\end{equation}
\end{lemma}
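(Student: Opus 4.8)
The plan is to establish the stronger identity $Q_{\lambda}(-r,-s) = Q_{\lambda}(r,s)$ as rational functions in the indeterminates $r,s$; specialising at $(r,s) = (-q^{2n},q)$ then yields $Q_{\lambda}(-q^{2n},q) = Q_{\lambda}(q^{2n},-q)$ immediately, since $(-r,-s) = (q^{2n},-q)$, and the denominators $s^{h(i,j)}-s^{-h(i,j)}$ do not vanish at generic $q$. Because $Q_{\lambda}$ is a product over the boxes of $\lambda$ of the two types of factors in (\ref{eq:themagixQpolynomial}), I would track the sign each factor acquires under $(r,s) \mapsto (-r,-s)$ and show the signs cancel in the total product, using throughout $(-r)^{\pm 1} = -r^{\pm 1}$ and $(-s)^{k} = (-1)^{k}s^{k}$ for $k \in \mathbb{Z}$.

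First I would treat a diagonal box $(j,j)$. Writing $a = \lambda_{j} - \lambda_{j}'$ and recalling $h(j,j) = \lambda_{j} + \lambda_{j}' - 2j + 1$, the substitution sends the numerator $r s^{a} - r^{-1}s^{-a} + s^{h(j,j)} - s^{-h(j,j)}$ to $-(-1)^{a}(rs^{a} - r^{-1}s^{-a}) + (-1)^{h(j,j)}(s^{h(j,j)} - s^{-h(j,j)})$ and the denominator to $(-1)^{h(j,j)}(s^{h(j,j)} - s^{-h(j,j)})$. The diagonal factor is therefore invariant precisely when $a + h(j,j)$ is odd, and indeed $a + h(j,j) = 2\lambda_{j} - 2j + 1$ is odd for every $j$. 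Hence every diagonal factor is unchanged.

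Next I would treat an off-diagonal box $(i,j)$, $i \neq j$. A short computation shows its factor is multiplied by $(-1)^{d(i,j) + h(i,j) + 1}$. Using the two cases of $d(i,j)$ together with $h(i,j) = \lambda_{i} - i + \lambda_{j}' - j + 1$, I would record that $d(i,j) + h(i,j) + 1 \equiv \lambda_{j} + \lambda_{j}' + 1 \pmod{2}$ when $i < j$, and $d(i,j) + h(i,j) + 1 \equiv \lambda_{i} + \lambda_{i}' + 1 \pmod{2}$ when $i > j$. Summing these exponents over all off-diagonal boxes and grouping the $i<j$ contributions by their column $j$ and the $i>j$ contributions by their row, the total sign exponent becomes
\[
S \equiv \sum_{j} (u_{j} + w_{j})(\lambda_{j} + \lambda_{j}' + 1) \pmod{2},
\]
where $u_{j} = \min(j-1,\lambda_{j}')$ counts the boxes of column $j$ strictly above the diagonal and $w_{j} = \min(j-1,\lambda_{j})$ counts the boxes of row $j$ strictly below it.

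The crux, and the step I expect to be the main obstacle, is showing $S \equiv 0$. Here I would invoke the Durfee square: let $p$ be the largest $k$ with $\lambda_{k} \geq k$ (equivalently $\lambda_{k}' \geq k$). For $j \leq p$ both $\lambda_{j}, \lambda_{j}' \geq j$, so $u_{j} = w_{j} = j-1$ and $u_{j} + w_{j}$ is even. For $j > p$ both $\lambda_{j}, \lambda_{j}' < j$, so $u_{j} + w_{j} = \lambda_{j} + \lambda_{j}'$ and the summand is $(\lambda_{j} + \lambda_{j}')(\lambda_{j} + \lambda_{j}' + 1)$, a product of consecutive integers and hence even. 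Thus every term of $S$ vanishes modulo $2$, the total sign is $+1$, and $Q_{\lambda}(-r,-s) = Q_{\lambda}(r,s)$ identically, which gives the lemma. The only delicate points are the bookkeeping that converts the box-by-box signs into the column and row counts $u_{j}, w_{j}$, and the verification that $\lambda_{j}, \lambda_{j}' < j$ for all $j$ beyond the Durfee square.
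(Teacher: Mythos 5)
Your proposal is correct, and its skeleton coincides with the paper's proof: both arguments reduce the claim to a parity statement about the off-diagonal boxes of $\lambda$ (your total sign exponent $d(i,j)+h(i,j)+1$, which is $\equiv \lambda_{j}+\lambda_{j}'+1 \pmod 2$ for $i<j$ and $\equiv \lambda_{i}+\lambda_{i}'+1 \pmod 2$ for $i>j$, is exactly the paper's sign identity (\ref{eq:makethingsright})), and both discharge that statement by grouping the off-diagonal boxes into the column and row segments meeting the diagonal --- your counts $u_{k}=\min(k-1,\lambda_{k}')$ and $w_{k}=\min(k-1,\lambda_{k})$ are precisely $|\mathrm{Ver}_{k}|$ and $|\mathrm{Hor}_{k}|$ in the paper, and your claim that each summand of $S$ is even is the paper's Eq. (\ref{eq:politesociety}). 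You do depart from the paper in two ways. First, you prove the stronger statement $Q_{\lambda}(-r,-s)=Q_{\lambda}(r,s)$ as an identity of rational functions in the indeterminates and only then specialise, whereas the paper compares the two specialisations $(\mp q^{2n},\pm q)$ directly; your version is the analogue, at the level of the trace weights $\mathrm{tr}(E_{RR})$, of Lemma \ref{lem:positivenegativelinkpolynomials}, and it yields the lemma for any pair of specialisations related by negating both variables (your observation that the denominators do not vanish at generic $q$ is all that the specialisation step requires). Second, in the final parity count the paper splits on the parity of $|\mathrm{Hor}_{k}\cup\mathrm{Ver}_{k}|$, showing in the odd case that $u_{k}+w_{k}=\lambda_{k}+\lambda_{k}'$; you split on the Durfee square, getting $u_{j}+w_{j}=2(j-1)$ inside it and the product of consecutive integers $(\lambda_{j}+\lambda_{j}')(\lambda_{j}+\lambda_{j}'+1)$ outside it. Both case analyses rest on the same Young-diagram fact (outside the Durfee square, $u_{j}=\lambda_{j}'$ and $w_{j}=\lambda_{j}$), so this is a reorganisation rather than a new idea, but it is arguably the cleaner bookkeeping, and the indeterminate form of the identity is a genuine, if modest, strengthening.
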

\begin{proof}
Simple calculations show that (\ref{eq:hideandseek}) is true if and only if
\begin{eqnarray*}
 \prod_{\stackrel{(i,j) \in \lambda}{i \neq j}} -q^{2n+d(i,j)}+q^{-2n-d(i,j)}
& = & 
\left(\prod_{\stackrel{(i,j) \in \lambda}{i < j}} (-1)^{\lambda_{j}'+\lambda_{j}}(q^{2n+d(i,j)}-q^{-2n-d(i,j)})\right) \\ 
& & \times \left(\prod_{\stackrel{(i,j) \in \lambda}{i > j}} (-1)^{\lambda_{i}+\lambda_{i}'}(q^{2n+d(i,j)}-q^{-2n-d(i,j)})\right)
\end{eqnarray*}
and this last equation is true if
\begin{equation}
\label{eq:makethingsright}
\prod_{\stackrel{(i,j) \in \lambda}{i \neq j}}(-1) = 
\left( \prod_{\stackrel{(i,j) \in \lambda}{i<j}} (-1)^{\lambda_{j}' + \lambda_{j} }  	\right) 
\left( \prod_{\stackrel{(i,j) \in \lambda}{i>j}} (-1)^{\lambda_{i} + \lambda_{i}'}   \right).
\end{equation}
We now show that (\ref{eq:makethingsright}) is true.
Define the following sets:
\begin{eqnarray*}
\mathrm{Hor}_{k} & = & \left\{ (k,j) \in \lambda | \ j=1, 2, \ldots, \min{ \{k-1, \lambda_{k}   \} } \right\} \\
\mathrm{Ver}_{k} & = & \left\{ (i,k) \in \lambda | \ i=1, 2, \ldots, \min{ \{ k-1, \lambda_{k}' \} } \right\}.
\end{eqnarray*}
Noting that $|\mathrm{Ver}_{k} \cap \mathrm{Hor}_{l}| = 0$ for all $k$ and $l$ and that
$|\mathrm{Hor}_{k} \cap \mathrm{Hor}_{l}| = 0 = |\mathrm{Ver}_{k} \cap \mathrm{Ver}_{i}|$ for all $k \neq i$,  
it follows that (\ref{eq:makethingsright}) is true if the following equation holds for each $k$: 
\begin{equation}
\label{eq:politesociety}
(-1)^{|\mathrm{Hor}_{k} \cup \mathrm{Ver}_{k}|} = 
\left( \prod_{(i,k) \in \mathrm{Ver}_{k}} (-1)^{\lambda_{k}' + \lambda_{k} }  	\right) 
\left( \prod_{(k,j) \in \mathrm{Hor}_{k}} (-1)^{\lambda_{k} + \lambda_{k}'}   \right).
\end{equation}
If $|\mathrm{Hor}_{k} \cup \mathrm{Ver}_{k}|$ is even, 
the right hand side of (\ref{eq:politesociety}) clearly equals $1$
as $\mathrm{Hor}_{k}$ and $\mathrm{Ver}_{k}$ are disjoint.
Alternatively, if $|\mathrm{Hor}_{k} \cup \mathrm{Ver}_{k}|$ is odd, 
then $\lambda_{k} \leq k-2$ and/or $\lambda_{k}' \leq k-2$.
If $\lambda_{k}' \leq k-2$, then $\lambda_{k} \leq k-1$ as $\lambda$ is a Young diagram.
Similarly, if $\lambda_{k} \leq k-2$, then $\lambda_{k}' \leq k-1$ as $\lambda$ is a Young diagram.
In both cases it follows that $\lambda_{k}' = |\mathrm{Ver}_{k}|$ and $\lambda_{k} = |\mathrm{Hor}_{k}|$.
If $|\mathrm{Hor}_{k} \cup \mathrm{Ver}_{k}|$ is odd, $\lambda_{k} + \lambda_{k}'$ is also odd as
$\lambda_{k} + \lambda_{k}' = |\mathrm{Hor}_{k}| + |\mathrm{Ver}_{k}| = 
|\mathrm{Hor}_{k} \cup \mathrm{Ver}_{k}|$, and clearly
the right hand side of (\ref{eq:politesociety}) equals $-1$.
Thus (\ref{eq:politesociety}) is true for each $k$, 
from which it follows that (\ref{eq:makethingsright}) is true,
which completes the proof of the lemma.
\end{proof}

\section*{Acknowledgments}
I would like to thank Prof. H. Wenzl for his comments on an 
early draft of this paper, Dr. N. Geer for discussions
during which the ideas in this paper crystallised and valuable comments from the referee.


\begin{thebibliography}{0}
\bibitem{bw} J. S. Birman and H. Wenzl,
Braids, link polynomials and a new algebra,
{\it Trans. AMS}. {\bf 313} (1989) 249--273.  

\bibitem{blumen05} S. C. Blumen, Quantum superalgebras at roots of unity and 
topological invariants of three-manifolds,
Ph. D. Thesis, University of Sydney (2005).

\bibitem{murakami} J. Murakami,
The Kauffman polynomial of links and representation theory,
{\it Osaka J. Math}. {\bf 24} (1989) 745--758. 

\bibitem{rw} A. Ram and H. Wenzl, 
Matrix units for centralizer algebras, 
{\it J. Alg}. {\bf 145} (1992) 378--395.  

\bibitem{tw} V. G. Turaev and H. Wenzl, 
Quantum invariants of 3-manifolds associated with classical simple Lie algebras,  
{\it Int. J. Math}. {\bf 4} (1993) 323--358. 

\bibitem{w2} H. Wenzl,
Quantum groups and subfactors of type $B$, $C$ and $D$,
{\it Comm. Math. Phys}. {\bf 133} (1990) 383--432. 

\bibitem{z1} R. B. Zhang, 
Braid group representations arising from quantum
supergroups with arbitrary $q$ and link polynomials, 
{\it J. Math. Phys}. {\bf 33} (1992) 3918--3930. 

\bibitem{zsuper} R. B. Zhang, 
Finite-dimensional representations of
$U_{q}(osp(1|2n))$ and its connection with quantum $so({2n+1})$,  
{\it Lett. Math. Phys}. {\bf 25} (1992) 317--325. 



\end{thebibliography}
\end{document}